\definecolor{mydarkblue}{rgb}{0,0.08,0.45}
\newcommand{\nleft}{\mathclose\bgroup\left}
\newcommand{\nright}{\aftergroup\egroup\right}
\newcommand{\Lg}{\beta}
\newcommand{\muw}{\nu}
\newcommand{\muh}{\mu}
\newcommand{\G}{G}
\newcommand{\h}{H}
\newcommand{\w}{w}
\newcommand{\Eset}{\mathbf{V}}
\newcommand{\N}{k}
\newcommand{\D}{d}
\newcommand{\support}{s}
\newcommand{\varsq}{\sigma^2}
\newcommand{\xstar}{x_\star}
\newcommand{\ystar}{y_\star}
\newcommand{\tauk}{\tau_{k}}
\newcommand{\inner}[2]{\langle #1,\, #2 \rangle}
\newcommand{\norm}[1]{\left\|#1\right\|}
\newcommand{\normsq}[1]{\left\|#1\right\|^{2}}
\newcommand{\E}{\mathbb{E}}
\newcommand{\fkopt}{f_{i_k}\kern-.65em{}^{\raisebox{1pt}{\tiny $*$}}\kern.05em}
\DeclareMathOperator*{\argmax}{arg\,max}
\DeclareMathOperator*{\argmin}{arg\,min}
\newtheorem{theorem}{Theorem}
\newtheorem{lemma}{Lemma}
\newtheorem{proposition}{Proposition}
\newtheorem{assumption}{Assumption}
\newtheorem{remark}{Remark}
\newtheorem{definition}{Definition}
\def\1{\bm{1}}
\DeclareMathAlphabet{\mathsfit}{\encodingdefault}{\sfdefault}{m}{sl}
\SetMathAlphabet{\mathsfit}{bold}{\encodingdefault}{\sfdefault}{bx}{n}
\newcommand{\R}{\mathbb{R}}
\newcommand{\tightsub}[1]{{\kern -.1em \raise-.1em\hbox{\tiny$#1$}}{}}
\DeclareDocumentCommand{\Normal}{g}{\mathop{\mathcal{N}}\IfNoValueF{#1}{\nleft(#1\nright)}}
\DeclareDocumentCommand{\bigO}{g}{\mathop{\mathcal{O}}\IfNoValueF{#1}{\nleft(#1\nright)}}
\NewDocumentCommand{\paren}{sm}{%
  \IfBooleanTF{#1}{(#2)}{\nleft(#2\nright)}
}
\NewDocumentCommand{\braces}{sm}{%
  \IfBooleanTF{#1}{(#2)}{\nleft\{#2\nright\}}
}
\newlength{\dittowidth}
\pgfplotsset{
    compat=1.13
    }
\tikzstyle{loosely dashed}=          [dash pattern=on 2pt off 4pt]
\pgfplotsset{plotOptions1/.style={%
		height=.34\linewidth,
		width=.45\linewidth,
		ymax=30000,ymin=.0001,
        xmin=1,xmax=100000,
		label style={font=\footnotesize},
		legend style={font=\footnotesize},
		ylabel={},
		xlabel={Iteration},
		ytick={.0001,.001,.01,.1,1, 10, 100, 1000, 10000, 100000},
		xtick={1, 10, 100, 1000, 10000, 100000},
        yticklabels={,,$10^{-3}$,,$10^{-1}$, , $10^1$, , $10^3$, , $10^5$},
        xticklabels={, $10^1$, , $10^3$, , $10^5$},
		tick label style={font=\scriptsize},
		grid=major,
		grid style={gray!25},
        axis x line*=none,
        axis y line*=none,
        x axis line style={gray!50},
        xtick style={gray!50},
        y axis line style={gray!50},
        ytick style={gray!50},
        xlabel near ticks
	}}
\pgfplotsset{plotOptions3/.style={%
		width=.3\linewidth,
		ymax=10,ymin=.000001,
		xmin=1,xmax=100000,
		label style={font=\footnotesize},
		legend style={font=\footnotesize},
		ytick={.00001,.0001,.001,.01,.1,1, 10},
		xtick={1, 10, 100, 1000, 10000, 100000},
        xticklabels={, $10^1$, , $10^3$, , $10^5$},
        yticklabels={$10^{-5}$,,$10^{-2}$,,$10^{-1}$,, $10^1$},
		ylabel={},
		xlabel={Iteration},
		grid=major,
		grid style={gray!25},
		tick label style={font=\scriptsize},
        axis x line*=none,
        axis y line*=none,
        x axis line style={gray!50},
        xtick style={gray!50},
        y axis line style={gray!50},
        ytick style={gray!50},
        xlabel near ticks
	}}
\title{Fast Stochastic Composite Minimization and an Accelerated Frank-Wolfe Algorithm under Parallelization}
\author{%
  Benjamin Dubois-Taine\\
  DI ENS,  Ecole normale supérieure,\\ Université PSL, CNRS, INRIA\\
  75005 Paris, France\\
  \texttt{benjamin.paul-dubois-taine@inria.fr} 
  \And
  Francis Bach\\
  DI ENS,  Ecole normale supérieure,\\ Université PSL, CNRS, INRIA\\
  75005 Paris, France\\
  \texttt{francis.bach@inria.fr} 
  \And
    Quentin Berthet\\
  Google Research, Brain team, Paris\\
  \texttt{qberthet@google.com}
   \And
  Adrien Taylor\\
  DI ENS,  Ecole normale supérieure,\\ Université PSL, CNRS, INRIA\\
  75005 Paris, France\\
  \texttt{adrien.taylor@inria.fr} 
}
\begin{document}

\maketitle
\begin{center}
    {\centering\color{red} (Update November 21, 2024: a minor error at the end of Appendix~\ref{sec:composite_proof} impacts a few constants throughout the paper. The correction is propagated in red in the whole document)}
\end{center}
\begin{abstract}

We consider the problem of minimizing the sum of two convex functions. One of those functions has Lipschitz-continuous gradients, and can be accessed via stochastic oracles, whereas the other is ``simple''. We provide a Bregman-type algorithm with accelerated convergence in function values to a ball containing the minimum. The radius of this ball depends on problem-dependent constants, including the variance of the stochastic oracle. We further show that this algorithmic setup naturally leads to a variant of Frank-Wolfe achieving acceleration under parallelization. More precisely, when minimizing a smooth convex function on a bounded domain, we show that one can achieve an $\epsilon$ primal-dual gap (in expectation) in $\tilde{O}(1 /\sqrt{\epsilon})$ iterations, by only accessing gradients of the original function and a linear maximization oracle with $O(1 / \sqrt{\epsilon})$ computing units in parallel. We illustrate this fast convergence on synthetic numerical experiments.
\end{abstract}

\section{Introduction}
\label{sec:introduction}
We consider the composite minimization problem
\begin{align}
    \label{eq:composite-min}
    \min_{y\in \Eset
    } \ \left\{F(y) := \G(y) + \h(y)\right\},
\end{align}
where $\Eset$ is some real vector space equipped with a norm $\norm{\cdot}$ and an inner product $\langle\cdot,\cdot\rangle$, see Section~\ref{sec:notations} for details. For instance, one can consider $\Eset= \R^d$ or $ \Eset = \R^{p\times q}$ equipped with the standard Euclidean inner product and norm. The function $\G$ is convex and $\Lg$-smooth and the function $\h$ is $\muh$-strongly convex, both with respect to (w.r.t.) some norm $\|.\|$ on $\Eset$ (see~\cref{sec:notations} for precise definitions). We assume that $\h$ is ``simple'', meaning that one can efficiently solve
\begin{align}\label{eq:bregman_oracle}
    \argmin_{y \in \Eset} \  \big\{\inner{z}{y} + \h(y) + \alpha D_\w(y, z_0)\big\},
\end{align}
where $D_\w$ is the Bregman divergence induced by a strongly convex function~$\w$. For instance, when $\w$ is the standard squared Euclidean norm, this amounts to computing the proximal operator of~$\h$. When deterministic gradients of $\G$ are available, an accelerated method relying on~\eqref{eq:bregman_oracle} and achieving rates of the form $O\left(\exp\left( - k \sqrt{\frac{\muh}{\Lg}}\right) \right)$ was proposed by~\citet[AGD+]{diakonikolas2021complementary}.

In this work, we are interested in the case where $\G$ is only available through a stochastic oracle. In particular, we provide an accelerated algorithm converging in function values to a neighborhood of the minimum with the same rate as above. The size of the neighborhood is of order $O\left(\frac{\sigma^2}{\sqrt{\muh \Lg}}\right)$ where $\sigma^2$ is the variance of the stochastic gradients. The dependence of the noise term on $1 / \sqrt{\muh \Lg}$ is similar to that of previous stochastic accelerated methods in simpler settings, see, e.g.,~\citep{aybat2019universally}. Although our rate is an extension of the result in~\citep{diakonikolas2021complementary}, the parameters are different and tailored for the stochastic setup.

In~\cref{sec:AFW}, we focus on minimization problems over a compact convex set $K$ for which we have access to a linear optimization oracle, just as in the Frank-Wolfe algorithm. Formally, we consider solving $\min_{x\in K} f(x)$ where $f$ is convex with Lipschitz-continuous gradients. In that case, the dual problem can be seen as a particular instance of~\eqref{eq:composite-min} where $H:=f^*$ is the Fenchel conjugate of $f$, and $G$ is a smoothed version of the support function of $K$. Such smoothed functions can only be accessed through a stochastic oracle whose computation boils down to solving a linear optimization problem on~$K$. An appropriate choice of~$\w$ allows us to use the algorithmic framework by only computing  gradients of~$f$ and solving linear optimization subproblems over~$K$. In short, minimizing $f$ over the set $K$ with an~$\epsilon$ primal-dual gap requires $\tilde{O}\left( \max\left\{\frac{1}{\sqrt{\epsilon}}, \frac{1}{\epsilon m}\right\}\right)$ iterations when $m$ computing units can be used in parallel (each of them only solving linear optimization subproblems on~$K$). In particular, when $m = \nicefrac{1}{\sqrt{\epsilon}}$, this effectively yields an accelerated rate of $\tilde{O}\left( \nicefrac{1}{\sqrt{\epsilon}}\right)$ iterations with a single gradient evaluation per iteration to achieve the required accuracy.

We emphasize that the first contribution of this work is to provide an accelerated method for general stochastic composite problems (in~\cref{sec:stochastic-composite-minimization}). In~\cref{sec:AFW}, we then show that it can be used for obtaining an accelerated Frank-Wolfe algorithm.

\subsection{Related Work}

As previously emphasized, the main algorithmic ingredients which inspired this work were developped by~\citep{diakonikolas2021complementary} and~\citep{gasnikov2018universal}. For further references on acceleration and gradient-based composite convex minimization, we refer to the original works~\citep{nesterov1983method,nesterov2013gradient} or to a recent survey~\citep{d2021acceleration}. For the stochastic setup, we refer to~\citep{lan2012optimal}. We refer to~\citep{nesterov2003introductory,devolder2013exactness} for further references and pointers on those topics.

Let us briefly describe the main differences between this work and~\citep{diakonikolas2021complementary,gasnikov2018universal}. First,~\citet{gasnikov2018universal} obtain accelerated rates similar to ours when the strong convexity assumption is placed on~$\G$ instead of $\h$ here (which has practical consequences, including well-posedness of~\eqref{eq:bregman_oracle} and the size of the constants $\Lg$ and $\muh$, see discussion in~\citep[Introduction]{diakonikolas2021complementary}). When the gradients of $\G$ are stochastic,~\cite{gasnikov2018universal} provides convergence rates, but only when the underlying norm is the Euclidean norm. Finally, whereas~\citet{diakonikolas2021complementary} consider more general assumptions in the deterministic non-Euclidean case, our work also covers stochastic approximations. We emphasize that each assumption we make (namely accelerated, stochastic, proximal Bregman methods) is necessary to yield acceleration of Frank-Wolfe under parallelization in~\cref{sec:AFW}.

Two key observations underlying this work are on the one hand the duality link between Frank-Wolfe methods and Bregman methods~\cite{bach2015duality}, and on the other hand randomized smoothing techniques~\cite{nesterov2017random, duchi2012randomized, abernethy2014online, abernethy2016perturbation, berthet2020learning, jaxopt_implicit_diff} which can be naturally computed using linear optimization steps. The main question of interest was whether accelerated Bregman methods should naturally give rise to accelerated Frank-Wolfe methods on the dual.

The Frank-Wolfe (FW) method (a.k.a.~conditional gradient method) and its variants were first introduced by~\citet{frank1956algorithm} (see also~\cite{jaggi2013revisiting, jaggi2011sparse} for  more modern presentations). When considering optimization over a convex set $K$,  classical first-order methods are often naturally embedded with a projection operator (onto $K$). Depending on $K$, those projections are potentially costly. An alternate approach for taking constraints into account within first-order method consists in using linear optimization oracles (a.k.a.~Frank-Wolfe techniques). In many applications, such linear minimizations are much cheaper than projecting onto the feasible set, see for example~\cite{freund2017extended, harchaoui2015conditional}. Despite its wide use in practical applications, the main drawback of the Frank-Wolfe method lies in its slow convergence rate of $O( \nicefrac{1}{\epsilon})$, standing in sharp contrast with the $O( \nicefrac{1}{\sqrt{\epsilon}})$ convergence of Nesterov's accelerated gradient descent~\cite{nesterov1983method} relying on projections.

As ``purely accelerated'' rates of convergence are out of reach for vanilla Frank-Wolfe methods (see lower complexity bound in~\cite{lan2013complexity}), most works on accelerating Frank-Wolfe have focused on exploiting specific additional assumptions on the problems at hand. Common such assumptions include strong convexity of the feasible set \cite{demianov1970approximate, levitin1966constrained, garber2015faster}, and strong convexity of the objective function along with the assumption that the minimizer lies in the interior of the feasible set~\cite{guelat1986some,lacoste2015global}. In both cases, Frank-Wolfe is known to improve on the $O(\nicefrac{1}{\epsilon})$ rate. Some efforts have also gone into finding rates matching performances of Nesterov's accelerated gradient descent \cite{nesterov1983method} without strong convexity. In particular, when $K$ is a polytope and when a certain type of constraint qualification is satisfied, Frank-Wolfe converges asymptotically at rate $O(\nicefrac{1}{k^2})$~\cite{bach2020effectiveness}. Adding momentum to Frank-Wolfe has also been studied, and accelerated rates are attained on some $\ell_p$-norm balls when the minimizer is on the boundary of the feasible set~\cite{li2020does}. Our approach in this work is orthogonal to the aforementioned results, in that we do not make additional assumptions on the objective function or the feasible set, but instead show that parallelization can help reaching accelerated rates for a variant of Frank-Wolfe. Note that \cite{lan2016conditional} manages to reach a similar rate of $O(\nicefrac{1}{\sqrt{\epsilon}})$ iterations for a variant of Frank-Wolfe, where each iteration requires one gradient evaluation and $O(\nicefrac{1}{\sqrt{\epsilon}})$ calls to a linear optimization oracle. However, in contrast with our algorithm, their approach is non-parallelizable as the calls to the linear optimization oracle need to be made sequentially.

The rest of the paper is organized as follows. In~\cref{sec:notations} we define notations and review some classical definitions from convex analysis. In~\cref{sec:stochastic-composite-minimization} we provide worst-case rates for a stochastic Bregman dual-averaging-type algorithm, along with some intuitions and proof sketches. In~\cref{sec:AFW} we show how a Frank-Wolfe method directly fits within this framework, thereby obtaining accelerated worst-case guarantees on the primal-dual gap. In~\cref{sec:experiments}, we illustrate our theoretical findings on a set of simple numerical examples.
\section{Notation and definitions}
\label{sec:notations}

Formally, we consider a real finite-dimensional vector space $\Eset$ and its dual space $\Eset^*$ consisting of all linear functions on $\Eset$, as well as a dual pairing denoted by $\langle\cdot,\cdot\rangle: \Eset^*\times\Eset\rightarrow \mathbb{R}$, and a norm $\norm{\cdot}:\Eset\rightarrow \mathbb{R}$. We also consider the corresponding dual norm $\norm{\cdot}_*:\Eset^*\rightarrow \mathbb{R}$ induced by the choice of $\norm{\cdot}$ and $\langle\cdot,\cdot\rangle$ using $\norm{z}_* = \sup_{\norm{x} \leq 1} \inner{z}{x}$. For instance, one can use $\Eset =\Eset^*= \R^d$ or $ \Eset =\Eset^*= \R^{p\times q}$ equipped with the standard Euclidean inner product and norm. We insist on the fact that the formal notation $\Eset$ and $\Eset^*$ is used mostly for emphasizing differences between primal and dual spaces/problems below.
For a closed, proper, convex function $\Psi : \Eset \rightarrow \R$, we denote $\partial \Psi(x)$ the set of all subgradients of $\Psi$ at $x$. When $\partial \Psi(x)$ is a singleton, we denote its single element by $\nabla \Psi(x)$.

\begin{definition} 
$\Psi : \Eset \rightarrow \R$ is $L$-smooth w.r.t. $\norm{\cdot}$ if for all $x, y \in \Eset$,
\begin{align}
    \norm{\nabla \Psi(x) - \nabla \Psi(y)}_* \leq L \norm{x - y}.
\end{align}
\end{definition}
The following proposition will be helpful throughout the paper (see \cite{nesterov2003introductory} for a proof).
\begin{proposition}
$\Psi : \Eset \rightarrow \R$ is $L$-smooth w.r.t. $\norm{\cdot}$ if and only if for all $x, y \in \Eset$,
\begin{align*}
    \Psi(x) \leq \Psi(y) + \inner{\nabla \Psi(y)}{x - y} + \frac{L}{2} \normsq{x - y}.
\end{align*}
\end{proposition}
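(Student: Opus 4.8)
The plan is to prove the two implications separately; only the reverse direction (the descent inequality implying the Lipschitz-gradient bound) is delicate, and it is where convexity of $\Psi$—available throughout, since every function we work with is convex—enters.

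For the forward direction, assume $\nabla\Psi$ is $L$-Lipschitz. I would parametrize the segment from $y$ to $x$ by $t\mapsto y+t(x-y)$ and write, via the fundamental theorem of calculus,
\[
\Psi(x)-\Psi(y)-\inner{\nabla\Psi(y)}{x-y}=\int_0^1 \inner{\nabla\Psi(y+t(x-y))-\nabla\Psi(y)}{x-y}\,\dif t.
\]
Bounding the integrand by the generalized Cauchy--Schwarz inequality $\inner{g}{u}\le\dnorm{g}\,\norm{u}$ and then invoking the definition of $L$-smoothness, $\dnorm{\nabla\Psi(y+t(x-y))-\nabla\Psi(y)}\le L\,\norm{t(x-y)}=Lt\,\norm{x-y}$, the integral is at most $\int_0^1 Lt\,\normsq{x-y}\,\dif t=\tfrac{L}{2}\normsq{x-y}$, which is exactly the claimed inequality. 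This step needs no convexity.

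For the reverse direction, assume the descent inequality holds for all $x,y$ and that $\Psi$ is convex. Fix $a\in\Eset$ and consider $\phi(z):=\Psi(z)-\inner{\nabla\Psi(a)}{z}$, which is convex, satisfies the same descent inequality with constant $L$, and has $\nabla\phi(a)=0$, so $a$ is a global minimizer of $\phi$. Minimizing both sides of $\phi(a)\le\phi(z)\le\phi(w)+\inner{\nabla\phi(w)}{z-w}+\tfrac{L}{2}\normsq{z-w}$ over $z$, and using the Fenchel-conjugacy identity $\min_{u}\{\inner{g}{u}+\tfrac{L}{2}\normsq{u}\}=-\tfrac{1}{2L}\dnorm{g}^2$ (because $\tfrac{1}{2L}\dnorm{\cdot}^2$ is the conjugate of $\tfrac{L}{2}\norm{\cdot}^2$), I obtain $\phi(a)\le\phi(w)-\tfrac{1}{2L}\dnorm{\nabla\phi(w)}^2$. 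Unfolding the definition of $\phi$ yields the strengthened lower bound
\[
\Psi(w)\ge\Psi(a)+\inner{\nabla\Psi(a)}{w-a}+\tfrac{1}{2L}\dnorm{\nabla\Psi(w)-\nabla\Psi(a)}^2 .
\]

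Finally I would symmetrize: writing this inequality once as stated and once with $a$ and $w$ interchanged, adding the two and cancelling the function values gives the co-coercivity estimate $\inner{\nabla\Psi(w)-\nabla\Psi(a)}{w-a}\ge\tfrac{1}{L}\dnorm{\nabla\Psi(w)-\nabla\Psi(a)}^2$. Combining this with Cauchy--Schwarz, $\inner{\nabla\Psi(w)-\nabla\Psi(a)}{w-a}\le\dnorm{\nabla\Psi(w)-\nabla\Psi(a)}\,\norm{w-a}$, and dividing through by $\dnorm{\nabla\Psi(w)-\nabla\Psi(a)}$ (the case where it vanishes being trivial) recovers $\dnorm{\nabla\Psi(w)-\nabla\Psi(a)}\le L\,\norm{w-a}$, i.e. $L$-smoothness. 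The main obstacle is precisely this reverse direction: in a general, possibly non-Euclidean norm one cannot simply complete the square, so the conjugacy identity for $\tfrac{L}{2}\norm{\cdot}^2$ and the minimizer argument are essential, and the step genuinely relies on convexity (a concave quadratic satisfies the descent bound with arbitrarily small $L$ while its gradient is not correspondingly Lipschitz).
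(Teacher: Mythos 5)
Your proof is correct. Note, though, that the paper itself offers no proof of this proposition: it simply points to Nesterov's textbook, so there is nothing in-paper to compare line by line. Your argument is essentially the classical one, but with two refinements worth highlighting. First, you carry it out for an arbitrary norm: the reverse direction cannot "complete the square" as in the Euclidean case, and your use of the conjugacy identity $\min_{u}\{\inner{g}{u}+\tfrac{L}{2}\normsq{u}\}=-\tfrac{1}{2L}\dnorm{g}^2$ is exactly the right substitute, yielding the lower bound $\Psi(w)\ge\Psi(a)+\inner{\nabla\Psi(a)}{w-a}+\tfrac{1}{2L}\dnorm{\nabla\Psi(w)-\nabla\Psi(a)}^2$, co-coercivity by symmetrization, and then the Lipschitz bound. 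Second, you correctly identify that the "only if" direction as literally stated is false without convexity (your concave quadratic counterexample is apt); the paper leaves this hypothesis implicit, relying on the surrounding convention that all functions considered are closed, proper and convex. So your proposal is not merely consistent with the cited reference --- it is a self-contained, non-Euclidean version of it, with the needed convexity hypothesis made explicit.
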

\begin{definition}
$\Psi : \Eset \rightarrow \R$ is $\mu$-strongly convex w.r.t. $\norm{\cdot}$ if for all $x, y \in \Eset$ and any $g_\Psi(y) \in \partial \Psi(y)$,
\begin{align}
    \Psi(x) \geq \Psi(y) + \inner{g_\Psi(y)}{x - y} + \frac{\mu}{2} \normsq{x - y}.
\end{align}
\end{definition}

\begin{definition}
For a closed, proper, convex function $\Psi : \Eset \rightarrow \R$, its conjugate function is defined as
\begin{align}
    \Psi^*(z) = \sup_{x \in \Eset} \ \  \inner{z}{x} - \Psi(x).
\end{align}
\end{definition}
For later references, we need a few results related to smoothing the support function of a set $K\subset\Eset^*$ ($K$ will appear as a convex set in the dual problem to~\eqref{eq:composite-min}).
\begin{definition}
For a set $K \subset \Eset^*$, we define $I_K$ as the indicator function of $K$, i.e., $I_K(x) = 0$ for $x\in K$ and $I_K(x) = \infty $ for $x \in \Eset^* \setminus K$. The support function of $K$ is defined as
\begin{align}
    \support(y) = \sup_{x \in K} \  \inner{x}{y}.
\end{align}
\end{definition}
If $K$ is non-empty, convex and closed, the indicator and support functions of $K$ are conjugates of each other, i.e., $(I_K)^* = \support$ and $\support^* = (I_K)^{**} = I_K$~\cite{rockafellar2015convex}. The support function is convex yet not differentiable in general applications. For the purpose of our work, we consider smoothing the support function via randomization. Such stochastic smoothing has recently gained popularity in the optimization literature, see for example~\cite{nesterov2017random, duchi2012randomized, abernethy2014online, abernethy2016perturbation, berthet2020learning}. We define the main tools and state the relevant properties behind this idea.
\begin{definition}
\label{def:smoothed-support}
For a set $K\subset \Eset^*$, a scalar $\alpha > 0$ and a random variable $\Delta$, we define the smoothed support function of $K$ as
\begin{align}
    \support_\alpha(y) = \E_\Delta \left[ \support(y + \alpha \Delta)\right] = \E_\Delta \left[ \sup_{x \in K}\  \inner{x}{y+ \alpha \Delta} \right].
\end{align}
\end{definition}
We will use the following proposition on the smoothed support function, proved in~\cite{berthet2020learning}.
\begin{proposition}
\label{prop:perturbed-opt}
Suppose $K\subset \Eset^*$ is convex compact and let $R_K = \max_{x \in K} \norm{x}_*$. Suppose the random variable $\Delta$ has positive differentiable density $d\pi(z) \propto \exp(- \eta(z))dz$ for some function~$\eta(\cdot)$, and let $M^2 = \E\left[ \normsq{\nabla_z \eta(Z)}_*\right]$. Then $\support_\alpha$ is convex, $\frac{R_K M}{\alpha}$-smooth w.r.t. $\norm{\cdot}$ and for any $y \in \Eset$,
\begin{align*}
        &\nabla \support_\alpha(y) = \E_\Delta \left[ \argmax_{x \in K} \inner{x}{y + \alpha \Delta}\right]
        \quad \text{ and }\quad 
        \support(y) \leq \support_\alpha(y) \leq \support(y) + \alpha \support_1(0).
    \end{align*}
\end{proposition}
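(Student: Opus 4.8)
The plan is to establish the four claims—convexity, the two-sided bound, the gradient formula, and smoothness—in increasing order of difficulty, treating the smoothness estimate with constant $\frac{R_K M}{\alpha}$ as the crux. All of the easier parts follow from structural (sublinearity) properties of the support function. Since $\support(w)=\sup_{x\in K}\inner{x}{w}$ is a supremum of linear functions it is convex, positively homogeneous and subadditive; and because $\support_\alpha(y)=\E_\Delta[\support(y+\alpha\Delta)]$ is an expectation of the convex maps $y\mapsto\support(y+\alpha\Delta)$, it is convex. For the upper bound, subadditivity and positive homogeneity give the pointwise inequality $\support(y+\alpha\Delta)\le\support(y)+\alpha\,\support(\Delta)$; taking expectations and recognizing $\E_\Delta[\support(\Delta)]=\support_1(0)$ yields $\support_\alpha(y)\le\support(y)+\alpha\,\support_1(0)$. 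The lower bound $\support(y)\le\support_\alpha(y)$ follows from Jensen's inequality applied to the convex map $\support$, using that the perturbation $\Delta$ is centered (the standard assumption for such smoothing).

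For the gradient formula I would differentiate under the expectation. By Danskin's theorem the support function is differentiable at every $w$ where the maximizer is unique, with $\nabla\support(w)=\argmax_{x\in K}\inner{x}{w}$. Since $\Delta$ has a positive, absolutely continuous density, for each fixed $y$ the perturbed point $y+\alpha\Delta$ avoids the (measure-zero) nonsmoothness locus of $\support$ almost surely, so the integrand is differentiable a.s. and its gradient is bounded in dual norm by $R_K$ (since every $x\in K$ has $\dnorm{x}\le R_K$). Dominated convergence then justifies interchanging $\nabla$ and $\E_\Delta$, delivering $\nabla\support_\alpha(y)=\E_\Delta[\argmax_{x\in K}\inner{x}{y+\alpha\Delta}]$.

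The hard part is the smoothness estimate, and here I would exploit the density to shift the $y$-dependence off the nonsmooth function $\support$ and onto the smooth weight $\exp(-\eta)$. Writing $\support_\alpha(y)=\frac{1}{C}\int\support(y+\alpha z)\exp(-\eta(z))\,dz$ with normalizing constant $C$, I would change variables to $u=y+\alpha z$, so that $y$ enters only inside the density; differentiating under the integral and changing variables back produces the Stein-type representation $\nabla\support_\alpha(y)=\frac{1}{\alpha}\E_\Delta[\support(y+\alpha\Delta)\,\nabla\eta(\Delta)]$. Evaluating this at two points $y_1,y_2$ under a shared sample $\Delta$, the gradients of the density cancel in a way that leaves only the difference of support values, giving $\dnorm{\nabla\support_\alpha(y_1)-\nabla\support_\alpha(y_2)}\le\frac{1}{\alpha}\E_\Delta\!\big[\abs{\support(y_1+\alpha\Delta)-\support(y_2+\alpha\Delta)}\,\dnorm{\nabla\eta(\Delta)}\big]$. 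Using that $\support$ is $R_K$-Lipschitz (a one-line consequence of $\dnorm{x}\le R_K$ on $K$) bounds the difference of support values by $R_K\norm{y_1-y_2}$, and then Cauchy–Schwarz (equivalently Jensen for the concave square root) passes from $\E_\Delta[\dnorm{\nabla\eta(\Delta)}]$ to $\sqrt{\E_\Delta[\normsq{\nabla\eta(\Delta)}_*]}=M$, yielding the claimed constant $\frac{R_K M}{\alpha}$. The main technical care lies in justifying the differentiation under the integral together with the change of variables defining the Stein representation; the positivity and differentiability of the density, combined with the uniform $R_K$-Lipschitz control on $\support$, are exactly what make these interchanges rigorous.
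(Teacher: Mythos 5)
The paper never proves this proposition itself: it is quoted from \cite{berthet2020learning}, so the only possible comparison is against that reference's argument, and your reconstruction follows essentially the same route. Convexity and the upper bound via sublinearity of $\support$ (with $\E_\Delta[\support(\Delta)]=\support_1(0)$), the gradient formula via Danskin plus the fact that the nonsmoothness locus of the Lipschitz convex function $\support$ is Lebesgue-null and hence avoided almost surely by $y+\alpha\Delta$, and above all the score-function (Stein) representation $\nabla\support_\alpha(y)=\tfrac{1}{\alpha}\E_\Delta\left[\support(y+\alpha\Delta)\,\nabla\eta(\Delta)\right]$ obtained by the change of variables $u=y+\alpha z$ --- from which the constant $\tfrac{R_K M}{\alpha}$ drops out using the $R_K$-Lipschitzness of $\support$ and Cauchy--Schwarz --- are exactly the ingredients of the cited proof. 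That part of your attempt is correct.

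The one point that needs flagging is your proof of the lower bound $\support(y)\leq\support_\alpha(y)$, where you invoke Jensen's inequality ``using that the perturbation $\Delta$ is centered (the standard assumption for such smoothing)''. That hypothesis appears nowhere in the statement, and it is not a cosmetic omission: some condition of this kind is genuinely necessary. Take $K=\{x_0\}$ a singleton, which is convex and compact; then $\support_\alpha(y)=\support(y)+\alpha\inner{x_0}{\E\Delta}$, which is strictly smaller than $\support(y)$ whenever $\inner{x_0}{\E\Delta}<0$, even though the density of $\Delta$ can be taken positive and differentiable (a shifted Gaussian). More generally one only gets $\support_\alpha(y)\geq\support(y)+\alpha\inner{x^*(y)}{\E\Delta}$ where $x^*(y)$ is a maximizer, so the minimal usable condition is $\inner{x}{\E\Delta}\geq 0$ for $x\in K$, of which $\E\Delta=0$ is the clean special case. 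Note that the paper's own experiments violate centering --- Gumbel$(0,1)$ noise has mean $\gamma\mathbf{1}$ with $\gamma>0$ --- and the lower bound survives there only because $K$ is the simplex, so $\inner{x}{\gamma\mathbf{1}}=\gamma\geq 0$. So your instinct to add a hypothesis is mathematically right (the gap is in the proposition as reproduced here, not in your reasoning), but you should state it as an explicit additional assumption rather than appeal to it as standard; as written, your argument does not prove the statement in the generality claimed.
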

\section{Fast Stochastic Composite Minimization}
\label{sec:stochastic-composite-minimization}

In this section we focus on solving the composite problem
\begin{align}
    \label{prob:composite}
    \min_{y \in \Eset} \ \big\{ F(y) := \G(y) + \h(y)\big\},
\end{align}
where $\G$ is convex and $\Lg$-smooth, and $\h$ is $\muh$-strongly convex w.r.t. $\norm{\cdot}$. \cref{alg:acc-stoch-bregman} summarizes our proposed algorithm, which resembles the general AGD+ algorithm from~\cite{diakonikolas2021complementary}. Notable differences include access to gradients of $\G$ through a stochastic oracle, and explicit dependencies on the smoothness and strong convexity constants for the different updates. Moreover, the presented analysis is different and specifically tailored to handle stochasticity in the gradients of $\G$.

An actual implementation of~\cref{alg:acc-stoch-bregman} requires the ability to efficiently solve the minimization step~\eqref{alg1:minimization-step}, which should be well-defined. This intermediate minimization subproblem is often referred to as a Bregman proximal problem, and a sufficient condition for this operation to be well-defined is to require $w$ to be strongly convex. In the case where $\w$ is the Euclidean norm,~\eqref{alg1:minimization-step} amounts to computing the proximal operator of $\h$. Considering a general regularizer $\w$ has several benefits, in that the Euclidean norm might not well capture the geometry of the problem, and because particular choices of $\w$ might make~\eqref{alg1:minimization-step} easily solvable. The latter will become particularly clear in~\cref{sec:AFW}. 

Before we move on to the analysis, let us emphasize that the first step of~\cref{alg:acc-stoch-bregman} consisting in picking $z_0 \in \argmin_y \w(y)$ is not restrictive. Indeed one could instead pick any $z_0 \in \Eset$ and set $\tilde{w}(y) = w(y) - \inner{g_{w}(z_0)}{y}$ where $g_w(z_0) \in \partial w(z_0)$. One could then run~\cref{alg:acc-stoch-bregman} with the ``shifted'' $\tilde{w}$ instead of $w$. Doing so does not change the complexity of the minimization step~\eqref{alg1:minimization-step}, and the following analysis remains unchanged.

We are now ready to analyse~\cref{alg:acc-stoch-bregman}. For this purpose let us define
\begin{align*}
    c_k := \sum_{i=0}^{k-1} (A_{i+1} - A_i) (\G(y_i) - \inner{\nabla \G(y_i)}{y_i}).
\end{align*}
where the sequences $\{A_k\}_{k\in \mathbb{N}}$ and $\{y_k\}_{k\in \mathbb{N}}$ are defined in~\cref{alg:acc-stoch-bregman}. We also use the notation $\E_k$ for denoting the expectation at iteration $k$ conditioned on the previous iterations (that is, $\E_k$ shortens $\E_k[\,\cdot\,]=\E[\,\cdot\, | \,y_k,\,z_k,\,d_k,\,c_k]$), while $\E$ denotes the total expectation.  Before we can state our results, we need one more assumption on the stochastic gradients.
\begin{assumption}
\label{assumption:bounded-variance}
For any $k \in \mathbb{N}$, the stochastic gradients satisfy $\E_k[g_k] = \nabla G(v_k)$ and $\E_k\normsq{g_k - \nabla \G(v_k)}_* \leq \varsq$.
\end{assumption}

\begin{algorithm}[ht]
\caption{Stochastic Composite Minimization}
\label{alg:acc-stoch-bregman}
\textbf{Input}: ($\Lg, \muh, \muw)$. $\Lg$-smooth function $\G$,  $\muh$-strongly convex function $\h$, $\muw$-strongly convex function $\w$, all w.r.t. the same norm $\norm{\cdot}$.\\
Pick $z_0 \in \argmin_y \w(y)$ and set $y_0 = z_0$.\\
Set $A_0 = 0$ and $d_0 = 0$.\\
\For{$k =0, 1, \dots$}{
    \begin{align}
    \label{alg1:quadratic_update}
    A_{k+1} &= \tfrac{A_k(\muh + 2\Lg + \sqrt{\muh \Lg}) + \Lg \muw + \sqrt{(\Lg \muw +  \muh A_k)^2 + 4A_k\Lg^2 \muw + 5 A_k^2 \muh \Lg + 2A_k \sqrt{\muh \Lg}( \Lg \muw + A_k \muh) }}{2(\Lg + \sqrt{\muh \Lg})}
     \\
    \tau_k&=1-\tfrac{A_k}{A_{k+1}}\\
    v_k &= (1 - \tauk)y_k + \tauk z_k\\
    \intertext{Compute a stochastic gradient $g_k$ of function $G$ at iterate $v_k$,}
    d_{k+1} &= d_k + (A_{k+1} - A_k) g_k\\
    \label{alg1:minimization-step}
    z_{k+1} &\in \argmin_{y \in \Eset} \left\{ \inner{d_{k+1}}{y} + A_{k+1}\h(y) + \Lg \w(y)\right\}\\
    y_{k+1} &= (1 - \tauk)y_k + \tauk z_{k+1}
    \end{align}
}
\end{algorithm}

The assumption on the variance of the stochastic gradients is common when studying stochastic first-order methods, and allows proving the next proposition, which relates consecutive iterations.
\begin{proposition}\label{prop:Lyap-breg}
Suppose~\cref{assumption:bounded-variance} holds and let $m_k(y) := \inner{d_k}{y} + c_k + A_{k}\h(y) + \Lg \w(y)$. At iteration $k$, the iterates of~\cref{alg:acc-stoch-bregman} satisfy
\begin{align}
    \label{eq:Lyap-recursion}
    \E_k\left[ A_{k+1} F(y_{k+1}) - m_{k+1}(z_{k+1}) \right] \leq A_k F(y_k) - m_k(z_k) + (A_{k+1} - A_k)\frac{\varsq}{2 \sqrt{\muh \Lg}}.
\end{align}
\end{proposition}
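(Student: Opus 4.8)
The plan is to treat $\phi_k := A_k F(y_k) - m_k(z_k)$ as a Lyapunov quantity and prove the one-step inequality~\eqref{eq:Lyap-recursion} directly. Since $c_{k+1}$ is constant in $y$, the minimization step~\eqref{alg1:minimization-step} means $z_{k+1} = \argmin_y m_{k+1}(y)$ (and similarly $z_k = \argmin_y m_k(y)$). I would upper bound $A_{k+1}F(y_{k+1})$ and lower bound $m_{k+1}(z_{k+1})$ separately, combine the two surely, and only take the conditional expectation $\E_k$ at the very end.

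For the first bound I would use that $\G$ is $\Lg$-smooth: the smoothness (descent) inequality applied at $v_k$, together with convexity of $\h$ along $y_{k+1}=(1-\tauk)y_k+\tauk z_{k+1}$, and the identities $A_{k+1}(1-\tauk)=A_k$, $A_{k+1}\tauk=A_{k+1}-A_k$ and $A_{k+1}(y_{k+1}-v_k)=(A_{k+1}-A_k)(z_{k+1}-z_k)$ (all immediate from the definitions of $\tauk$ and $v_k$) yield an upper bound on $A_{k+1}F(y_{k+1})$ whose only gradient is $\nabla\G(v_k)$, whose $\h$-part splits as $A_k\h(y_k)+(A_{k+1}-A_k)\h(z_{k+1})$, and which carries a quadratic remainder $\tfrac{\Lg(A_{k+1}-A_k)^2}{2A_{k+1}}\normsq{z_{k+1}-z_k}$.

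For the second bound, $m_k$ is $(A_k\muh+\Lg\muw)$-strongly convex (sum of the $A_k\muh$-strongly convex $A_k\h$ and the $\Lg\muw$-strongly convex $\Lg\w$), so its minimizer $z_k$ gives $m_k(z_{k+1})\ge m_k(z_k)+\tfrac{A_k\muh+\Lg\muw}{2}\normsq{z_{k+1}-z_k}$. Adding the model increment $m_{k+1}-m_k$ at $z_{k+1}$, which by the updates of $d,c,A$ equals $(A_{k+1}-A_k)\big(\inner{g_k}{z_{k+1}-v_k}+\G(v_k)+\h(z_{k+1})\big)$ (the constant $c_{k+1}-c_k$ being exactly the zeroth-order part of the stochastic linearization of $\G$ anchored at $v_k$), gives the lower bound on $m_{k+1}(z_{k+1})$. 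Subtracting from the first bound, the $\h(z_{k+1})$ terms cancel; I would then substitute $A_k\h(y_k)=A_kF(y_k)-A_k\G(y_k)$, write $g_k=\nabla\G(v_k)-e_k$ with $e_k:=\nabla\G(v_k)-g_k$, and use convexity of $\G$ on $\G(v_k)-\G(y_k)$ together with $v_k-z_k=(1-\tauk)(y_k-z_k)$ and $A_k\tauk=(1-\tauk)(A_{k+1}-A_k)$. The purpose of these identities is that all deterministic first-order terms in $\nabla\G(v_k)$ cancel exactly, leaving
\[ A_{k+1}F(y_{k+1})-m_{k+1}(z_{k+1}) \le A_kF(y_k)-m_k(z_k) + (A_{k+1}-A_k)\inner{e_k}{z_{k+1}-v_k} + Q_k\normsq{z_{k+1}-z_k}, \]
where $Q_k := \tfrac{\Lg(A_{k+1}-A_k)^2}{2A_{k+1}}-\tfrac{A_k\muh+\Lg\muw}{2}$.

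It remains to take $\E_k$ and control the noise, which is the crux. Since $z_k,v_k$ are determined before $g_k$ is drawn and $\E_k[e_k]=0$, I can replace $z_{k+1}-v_k$ by $z_{k+1}-z_k$ inside the expectation, as $\E_k\inner{e_k}{z_k-v_k}=0$; this alignment is precisely what lets Young's inequality $\inner{e_k}{z_{k+1}-z_k}\le \tfrac{1}{2\sqrt{\muh\Lg}}\dnorm{e_k}^2+\tfrac{\sqrt{\muh\Lg}}{2}\normsq{z_{k+1}-z_k}$ be matched against the available negative curvature $Q_k\normsq{z_{k+1}-z_k}$, while \cref{assumption:bounded-variance} bounds $\E_k\dnorm{e_k}^2\le\varsq$. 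Collecting terms, the coefficient of $\E_k\normsq{z_{k+1}-z_k}$ becomes $Q_k+(A_{k+1}-A_k)\tfrac{\sqrt{\muh\Lg}}{2}$, and a short computation shows the update~\eqref{alg1:quadratic_update} is exactly the larger root of $\Lg(A_{k+1}-A_k)^2+\sqrt{\muh\Lg}\,A_{k+1}(A_{k+1}-A_k)=A_{k+1}(A_k\muh+\Lg\muw)$, which makes that coefficient vanish; what survives is precisely $(A_{k+1}-A_k)\tfrac{\varsq}{2\sqrt{\muh\Lg}}$, giving~\eqref{eq:Lyap-recursion}. The main obstacle is this final coupling: isolating the mean-zero part of the noise via unbiasedness and the conditioning, and then tuning the Young constant to be exactly $\sqrt{\muh\Lg}$ so that the residual quadratic is annihilated by the tailored recursion — making these constants line up is what forces the specific form of~\eqref{alg1:quadratic_update}.
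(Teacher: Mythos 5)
Your proof is correct, and it reaches the paper's inequality by a somewhat different, and in one respect cleaner, route. The shared skeleton is identical: the smoothness inequality for $\G$ anchored at $v_k$, the identities $A_{k+1}\tau_k=A_{k+1}-A_k$ and $A_k\tau_k=(1-\tau_k)(A_{k+1}-A_k)$ that make the deterministic first-order terms cancel, the split of the noise inner product into a conditionally mean-zero part plus $\inner{e_k}{z_{k+1}-z_k}$, Young's inequality with parameter exactly $\sqrt{\muh\Lg}$, and the same tuning of $A_{k+1}$ --- indeed your condition $\Lg(A_{k+1}-A_k)^2+\sqrt{\muh\Lg}\,A_{k+1}(A_{k+1}-A_k)=A_{k+1}(A_k\muh+\Lg\muw)$ rearranges to the paper's quadratic $A_{k+1}^2(\Lg+\rho)-A_{k+1}\left(A_k(\muh+2\Lg+\rho)+\Lg\muw\right)+\Lg A_k^2=0$ with $\rho=\sqrt{\muh\Lg}$, and your residual coefficient $Q_k+\tfrac{\sqrt{\muh\Lg}}{2}(A_{k+1}-A_k)$ coincides exactly with the coefficient the paper sets to zero. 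Where you genuinely diverge is the treatment of the model term: the paper proves a separate lemma applying strong convexity of $\h$ at $y_{k+1}$ with matched subgradients (generating an extra $-\tfrac{\muh}{2}A_k(1-\tfrac{A_k}{A_{k+1}})\normsq{z_{k+1}-y_k}$ term that is only discarded at the end), then invokes the optimality condition $0\in d_k+A_k\partial\h(z_k)+\Lg\partial\w(z_k)$ to substitute a subgradient identity, and finally uses strong convexity of $\w$. You compress all three steps into the single observation that $m_k$ is $(A_k\muh+\Lg\muw)$-strongly convex and unconstrainedly minimized at $z_k$, so $m_k(z_{k+1})\ge m_k(z_k)+\tfrac{A_k\muh+\Lg\muw}{2}\normsq{z_{k+1}-z_k}$; this uses the same $0\in\partial m_k(z_k)$ but packages it once, and lets you get away with plain convexity of $\h$ in the averaging step, never creating the $\normsq{z_{k+1}-y_k}$ term the paper later throws away. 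One bookkeeping caveat: you anchor the constant increment as $c_{k+1}-c_k=(A_{k+1}-A_k)\left(\G(v_k)-\inner{g_k}{v_k}\right)$, with the \emph{stochastic} gradient, whereas the paper's appendix uses $\nabla\G(v_k)$ (its main-text definition of $c_k$ even writes $y_i$ in place of $v_i$, a typo). Since the discrepancy $(A_{k+1}-A_k)\inner{\nabla\G(v_k)-g_k}{v_k}$ is conditionally mean-zero and $v_k$ is measurable with respect to the conditioning in $\E_k$, your recursion is equivalent to the paper's, and the downstream use in \cref{thm:composite-result} is unaffected --- but you should state your convention for $c_k$ explicitly so that $m_k$ in the statement is unambiguous.
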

We defer the full proof to~\cref{app:composite-proofs}, but highlight the main steps here. This way we also hope to shed light on the update of $A_{k+1}$. The first step is to compute an upper bound on $A_{k+1}\G(y_{k+1})$ depending on $A_k G(y_k)$. The second step is similar, and computes an upper bound on $A_{k+1}\h(y_{k+1})$ depending on $A_k \h(y_k)$. Summing up the two inequalities and taking expectations yields inequality~\eqref{eq:Lyap-recursion} with an additional term in $\normsq{z_{k+1} - z_k}$. To exactly obtain inequality~\eqref{eq:Lyap-recursion} we set $A_{k+1}$ so as to cancel out the coefficient multiplying $\normsq{z_{k+1} - z_k}$. This turns out to be equivalent to setting $A_{k+1}$ as the root of a quadratic polynomial, explaining the form of update~\eqref{alg1:quadratic_update} in~\cref{alg:acc-stoch-bregman}.

\cref{prop:Lyap-breg} allows us to get the final rate of convergence of~\cref{alg:acc-stoch-bregman}. We again defer the proof to~\cref{app:composite-proofs}. The idea here is first to unroll the recursion in~\eqref{eq:Lyap-recursion} so as to get a constant term on the right-hand side of the inequality. We then relate $m_N(z_N)$ to $m_N(\ystar)$ and to the minimum of $F$ and finally, we show that $A_{k+1} \geq A_k \left( 1+ \frac{\sqrt{\muh}}{2 \left( \sqrt{\Lg} + \sqrt{\muh} \right)}\right)$, which gives the exponential decay term.
\begin{theorem}
\label{thm:composite-result}
Suppose~\cref{assumption:bounded-variance} holds, let $\ystar \in \argmin F$ and define $D_w(\ystar, y_0) = w(\ystar) - w(y_0) \geq 0$. The convergence rate of~\cref{alg:acc-stoch-bregman} after $\N$ iterations is
\begin{align}
    \E[F(y_\N) - F(\ystar)] \leq {\color{red}\frac{\sqrt{\beta} +\sqrt{  \mu }}{\nu \sqrt{\beta}  }}\exp\left( - \frac{{\color{red}(\N-1)} \sqrt{\muh}}{{\color{red} 4} \left( \sqrt{\Lg} + \sqrt{\muh}\right)} \right)\Lg D_\w(\ystar, y_0) + \frac{\varsq}{2\sqrt{\muh \Lg}}.
\end{align}
\end{theorem}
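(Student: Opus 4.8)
The plan is to telescope the one-step inequality of \cref{prop:Lyap-breg} into a bound on $\E[F(y_\N) - F(\ystar)]$, and then to control the decay through the growth of the scalars $A_k$.

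First, I would introduce the potential $\Phi_k := A_k F(y_k) - m_k(z_k)$, take total expectations in \eqref{eq:Lyap-recursion}, and sum over $k = 0, \dots, \N-1$. The potential terms telescope and, since $A_0 = 0$, the noise contributions sum to $(A_\N - A_0)\tfrac{\varsq}{2\sqrt{\muh\Lg}} = A_\N \tfrac{\varsq}{2\sqrt{\muh\Lg}}$, giving $\E[\Phi_\N] \le \Phi_0 + A_\N \tfrac{\varsq}{2\sqrt{\muh\Lg}}$. Using the initialization $A_0 = 0$, $d_0 = 0$, $c_0 = 0$ (empty sum) and $z_0 = y_0$, the model collapses to $m_0(z_0) = \Lg \w(y_0)$, so $\Phi_0 = -\Lg \w(y_0)$.

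Next I would lower bound the model at $z_\N$. By \eqref{alg1:minimization-step}, $z_\N$ minimizes $y \mapsto \inner{d_\N}{y} + A_\N \h(y) + \Lg \w(y)$, and since $c_\N$ is constant in $y$ it also minimizes $m_\N$; hence $m_\N(z_\N) \le m_\N(\ystar)$. Plugging this in yields $\E[A_\N F(y_\N)] \le \E[m_\N(\ystar)] - \Lg \w(y_0) + A_\N\tfrac{\varsq}{2\sqrt{\muh\Lg}}$. To handle $\E[m_\N(\ystar)]$, I would argue that the affine part $\inner{d_\N}{\cdot} + c_\N$ is, in expectation, an aggregate of weighted tangent minorants of $\G$: combining convexity of $\G$ with the unbiasedness $\E_k[g_k] = \nabla \G(v_k)$ of \cref{assumption:bounded-variance} and the tower rule gives $\E[\inner{d_\N}{\ystar} + c_\N] \le A_\N \G(\ystar)$. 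Since $\h(\ystar)$ and $\w(\ystar)$ are deterministic, this gives $\E[m_\N(\ystar)] \le A_\N \G(\ystar) + A_\N \h(\ystar) + \Lg \w(\ystar) = A_\N F(\ystar) + \Lg \w(\ystar)$.

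Combining the last two estimates and dividing by $A_\N$ produces $\E[F(y_\N) - F(\ystar)] \le \tfrac{\Lg(\w(\ystar) - \w(y_0))}{A_\N} + \tfrac{\varsq}{2\sqrt{\muh\Lg}}$, where the noise floor is exactly the stated constant because the $A_\N$ cancels. It then remains to show $1/A_\N$ decays geometrically, for which I would prove $A_{k+1} \ge A_k\big(1 + \tfrac{\sqrt{\muh}}{2(\sqrt{\Lg} + \sqrt{\muh})}\big)$ directly from the quadratic update \eqref{alg1:quadratic_update} and unroll it (from the base value $A_1 > 0$, since $A_0 = 0$) to bound $1/A_\N$ by $\exp\big(-\N \tfrac{\sqrt{\muh}}{2(\sqrt{\Lg} + \sqrt{\muh})}\big)$ against the factor $\Lg D_\w(\ystar, y_0)$. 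I expect the telescoping and the convexity/unbiasedness estimate to be routine; the delicate part is this last step, namely extracting the uniform ratio lower bound from the opaque closed-form root \eqref{alg1:quadratic_update}, which should require isolating the $A_k^2$-terms inside the discriminant and bounding the square root from below, while treating $A_1$ separately since $A_0 = 0$ precludes a naive multiplicative bound from the very first step.
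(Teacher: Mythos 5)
Your proposal follows essentially the same route as the paper's proof: telescoping \cref{prop:Lyap-breg} from $\Phi_0=-\Lg\w(y_0)$, combining $m_\N(z_\N)\le m_\N(\ystar)$ with convexity of $\G$ and unbiasedness of the $g_t$ to get $\E[m_\N(\ystar)]\le A_\N F(\ystar)+\Lg\w(\ystar)$, dividing by $A_\N$, and then lower-bounding the ratio $A_{k+1}/A_k$ from the quadratic update \eqref{alg1:quadratic_update}. If anything you are slightly more careful than the paper, which unrolls $A_{k+1}\ge A_k\bigl(1+\tfrac{\sqrt{\muh}}{2(\sqrt{\Lg}+\sqrt{\muh})}\bigr)$ into the exponential bound without explicitly addressing the $A_0=0$ base case or the $(1+\delta)^{\N}$ versus $e^{\N\delta}$ gap that you correctly flag as the delicate step.
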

This rate is a typical accelerated rate. In the (usual) case where $\muh \leq \Lg$, the exponential decay is bounded above by $\exp\left(-\frac{{\color{red}(\N - 1)}}{{\color{red} 8}} \sqrt{\frac{\muh}{\Lg}}\right)$, which shows the natural dependence on $\sqrt{\frac{\muh}{\Lg}}$. In addition, the neighborhood term is of the form $O\left( \frac{\varsq}{\sqrt{\muh \Lg}}\right)$, which is again typical of accelerated stochastic methods~\cite{aybat2019universally}. Note that $ w(\ystar) - w(y_0)$ is equal to the Bregman divergence $D_w(\ystar, y_0):=w(\ystar) - w(y_0)-\langle g_w(y_0); \ystar-y_0\rangle$ (with $g_w(y_0)\in\partial w(y_0)$) through the choice $g_w(y_0)=0\in\partial w(y_0)$, which is valid as $y_0$ minimizes $w(\cdot)$. We emphasize that we do not require differentiability of the function $\w$ anywhere.

In the next section, we show how the above algorithm can be directly applied to a smoothed dual of a minimization problem over a compact convex set, yielding a variant of the   Frank-Wolfe algorithm which can achieve accelerated rates under parallelization.

\section{Accelerating Frank-Wolfe with parallelization} 
\label{sec:AFW}

We consider the following minimization problem over a compact convex set $K \subset \Eset^*$
\begin{align}
    \min_{x \in \Eset^*}\ \big\{  f(x) + I_K(x) \big\},
\end{align}
where $f$ is convex and $L$-smooth w.r.t. $\norm{\cdot}_*$. Its Fenchel-Rockafellar dual \cite{rockafellar2015convex} reads
\begin{align}
    \max_{y \in \Eset}\hspace{0.5ex} \big\{\D(y) :=   -\support(-y) - f^*(y) \big\} .
\end{align}
The smoothness of $f$ implies that $f^*$ is $\nicefrac{1}{L}$-strongly convex w.r.t. $\norm{\cdot}$ \cite[Proposition 12.60]{rockafellar2009variational}. The term in $\support$ is however not smooth w.r.t. $\norm{\cdot}$,  which prevents us from directly applying results from the previous section. Instead, we choose some smoothing parameter $\alpha > 0$ and, following~\cref{def:smoothed-support} and~\cref{prop:perturbed-opt}, we consider the smoothed minimization problem
\begin{align}
    \label{prob:dual-smoothed}
    \min_{y \in \Eset} \ \big\{ \support_\alpha(-y) + f^*(y)\big\}.
\end{align}
Problem~\eqref{prob:dual-smoothed} fits within the framework of~\eqref{prob:composite} with $G(y) = \support_\alpha(-y)$, $H(y) = f^*(y)$, $\muh = \nicefrac{1}{L}$ and $\Lg = \frac{R_K M}{\alpha}$. Using~\cref{prop:perturbed-opt}, we see that an unbiased stochastic gradient $g$ of $\G$ at some point $y$ can be obtained by sampling $\Delta$ and computing $g = - \argmax_{u \in K} \inner{u}{-y + \alpha \Delta}$. This boils down to a linear optimization oracle over $K$, the same oracle as that of the Frank-Wolfe algorithm.

We now show how to pick the distance generating function $\w$ such that the minimization step of~\cref{alg:acc-stoch-bregman} has a closed form. Recalling that $z_0$ must minimize $\w$, we set $w(y) = f^*(y) - \inner{g_{f^*}(z_0)}{y}$ where $g_{f^*}(z_0) \in \partial f^*(z_0)$. Clearly, $\w$ is $\nicefrac{1}{L}$-strongly convex w.r.t. $\norm{\cdot}$ and is minimized at $z_0$. Moreover, first-order optimality conditions for the minimization step~\eqref{alg1:minimization-step} of~\cref{alg:acc-stoch-bregman} read
\begin{align}
    0 &\in d_{k+1} + A_{k+1} \partial f^*(z_{k+1}) + \frac{R_K M}{\alpha} \left( \partial f^*(z_{k+1}) - g_{f^*}(z_0) \right)\\
    \label{eq:opt-conditions-dual}
     \iff \partial f^*(z_{k+1}) &\ni \frac{\frac{R_K M}{\alpha}}{A_{k+1} + \frac{R_K M}{\alpha}} g_{f^*}(z_0) - \frac{d_{k+1}}{A_{k+1} + \frac{R_K M}{\alpha}}.
\end{align}
Choosing some $x_0 \in K$ and setting $z_k = \nabla f(x_k)$ for all $k$, one can replace~\eqref{eq:opt-conditions-dual} by 
\begin{align}
    x_{k+1} = \frac{\frac{R_K M}{\alpha}}{A_{k+1} + \frac{R_K M}{\alpha}} x_0 - \frac{d_{k+1}}{A_{k+1} + \frac{R_K M}{\alpha}}.
\end{align}
Doing this allows to avoid computing subgradients of $f^*$. Instead, whenever the value of $z_{k}$ is needed, we compute $\nabla f(x_{k})$. We summarize this fully primal algorithm in~\cref{alg:AFW}. Note that similar tricks to obtain ``dual-free'' methods have already been used, see for example~\cite{lan2012optimal, wang2017exploiting, hendrikx2020dual, cohen2021relative, jin2022sharper}. 

\begin{algorithm}[ht]
\caption{Parallel Frank-Wolfe (PFW)  }
\label{alg:AFW}
\textbf{Input}: $(L, x_0, R_K, M, m, \alpha, T)$. $L$-smooth convex function $f$ w.r.t. $\norm{\cdot}_*$, $x_0 \in K$, $R_K = \max_{x\in K} \norm{x}_*$, distribution with density $d\pi(z) \propto \exp(-\eta(z))dz$ such that $M^2 = \E_\Delta \norm{\nabla \eta(\Delta)}_*^2$, $m$ computing units in parallel, smoothing parameter $\alpha$, number of iterations $T$.\\
Set $A_0 = 0$, $d_0 = 0$, $\beta = \tfrac{R_K M}{\alpha}$, $\muh = \muw =  \tfrac{1}{L}$, $y_0 = \nabla f(x_0)$.\\
\For{$k =0, 1, \dots, T-1$}{
\begin{align*}
     A_{k+1} &= \tfrac{A_k(\muh + 2\Lg + \sqrt{\muh \Lg}) + \Lg \muh + \sqrt{\muh^2(\Lg +  A_k)^2 + 4A_k\Lg^2 \muh + 5 A_k^2 \muh \Lg + 2A_k \muh\sqrt{\muh \Lg}( \Lg + A_k) }}{2(\Lg + \sqrt{\muh \Lg})}\\
        \tau_k &=1-\tfrac{A_k}{A_{k+1}}\\
        v_k &= (1 - \tauk)y_k + \tauk \nabla f(x_k)
    \intertext{For all $i \in [m]$ in parallel, sample $\Delta_i \sim d\pi $, compute 
    $g_{k,i} =  -\argmax_{u \in K}\inner{u}{-v_k + \alpha \Delta_i}$}
    g_k &= \tfrac{1}{m} \sum_{i=1}^m g_{k, i}\\
        d_{k+1} &= d_k + (A_{k+1} - A_k) g_k\\
        x_{k+1} &= \tfrac{\beta}{A_{k+1} + \beta} x_0 - \tfrac{1}{A_{k+1} + \beta}d_{k+1}\\
         y_{k+1} &= (1 - \tauk)y_k + \tauk \nabla f(x_{k+1})
    \end{align*}
}
\end{algorithm}

Observe that each iteration of~\cref{alg:AFW} requires the computation of one gradient of $f$, and $m$ calls to a linear maximization oracle. When $m=1$, an iteration of~\cref{alg:AFW} is as costly as one iteration of Frank-Wolfe. We now show how to further exploit parallelization. To do so, we need to ensure that using multiple samples appropriately improves the quality of the approximation of the true gradient. This the point of the following assumption. 
\begin{assumption}
\label{assumption:improved_variance} 
There exists a norm-dependent constant $\rho_{\norm{\cdot}_*}$ such that the variance verifies 
\begin{align}
\sigma^2 = \E_k\normsq{\frac{1}m \sum_{i=1}^m g_{k,i}-\nabla G(v_k)}_* \leq \frac{ 4 R_K^2 \rho_{\|\cdot\|_*}}{m}.
\end{align}
\end{assumption}
\begin{remark}
A few examples are in order. For the standard Euclidean norm $\|\cdot\| = \|\cdot\|_* =\|\cdot\|_2$ it holds that $\rho_{\|\cdot\|_2}=1$. For $\ell_p$-norms on a space of dimension $d$, we have
\begin{align}
    \rho_{\norm{\cdot}_p} = \begin{cases}
    d^{\nicefrac{2}{p} - 1} &\text{ for } 1 \leq p < 2\\
    p-1 & \text{ for } 2 \leq p < \infty\\
    e^2(\log d + 1) &\text{ for } p = \infty.
    \end{cases}
\end{align}
In the general case, as all norms are equivalent in finite dimensional spaces, there exist constants $c, C > 0$ such that $c \norm{\cdot}_2 \leq \norm{\cdot}_* \leq C \norm{\cdot}_2$. Then $\rho_{\norm{\cdot}_*} \leq \frac{C^2}{c^2}$. We refer to Appendix~\ref{app:norm_dependency} for more details and references.
\end{remark}

As mentioned earlier, one can directly obtain convergence of the dual problem~\eqref{prob:dual-smoothed} by applying~\cref{thm:composite-result}. The next theorem states that not only are the iterates $\{x_k\}_{k \in \mathbb{N}}$ always feasible, but also that the previous rate of convergence also holds for the primal-dual gap between the iterates $\{x_k\}_{k\in\mathbb{N}}$ and $\{y_k\}_{k\in\mathbb{N}}$. For simplicity of the exposition we assume that $\frac{R_K M}{\alpha} \geq \frac{1}{L}$. In practice this is not an issue since we will choose small values of $\alpha$. 

\begin{theorem}
\label{thm:AFW-result}
Suppose $f$ is convex and $L$-smooth w.r.t. $\norm{\cdot}_*$ and let $x_0 \in K$. Under~\cref{alg:AFW}, the primal iterates are always feasible, i.e.,  $x_k \in K$ for all $k \in \mathbb{N}$. Moreover, after $\N$ iterations, we have
     \begin{align*}
        \E \left[ f(x_\N) - \D(y_\N) \right] \leq \exp\left( -{\color{red}(\N - 1)} \tfrac{\sqrt{\alpha}}{{\color{red} 8} \sqrt{L R_K M}}\right) {\color{red} 2L }\tfrac{R_K M}{ \alpha} \left( f(x_0) - f(\xstar)\right) + \tfrac{2R_K^2 \rho_{\norm{\cdot}_*}}{m} \sqrt{\tfrac{\alpha L}{R_K M}} + \alpha \support_1(0).
    \end{align*}
\end{theorem}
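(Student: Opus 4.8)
The plan is to obtain Theorem~\ref{thm:AFW-result} as a corollary of Theorem~\ref{thm:composite-result} applied to the smoothed dual problem~\eqref{prob:dual-smoothed}, after two preparatory steps: establishing primal feasibility, and converting the dual suboptimality into a primal-dual gap. First I would verify that the instantiation $G(y)=\support_\alpha(-y)$, $H(y)=f^*(y)$, $\muh=\muw=\nicefrac1L$ and $\Lg=\frac{R_K M}{\alpha}$ genuinely fits the hypotheses of Section~\ref{sec:stochastic-composite-minimization}: $f^*$ is $\nicefrac1L$-strongly convex by \cite[Proposition 12.60]{rockafellar2009variational}, and $\support_\alpha(-\cdot)$ is convex and $\frac{R_K M}{\alpha}$-smooth by~\cref{prop:perturbed-opt}. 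I would also check that~\cref{assumption:bounded-variance} holds with the variance bound of~\cref{assumption:improved_variance}, since the stochastic gradient $g_k=\frac1m\sum_i g_{k,i}$ is the empirical mean of $m$ i.i.d.\ unbiased samples, giving $\varsq\le \frac{4R_K^2\rho_{\|\cdot\|_*}}{m}$.

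Next I would establish primal feasibility by induction. The key observation is that the update $x_{k+1}=\frac{\beta}{A_{k+1}+\beta}x_0-\frac{1}{A_{k+1}+\beta}d_{k+1}$ expresses $x_{k+1}$ as a convex combination of $x_0\in K$ and the quantities $-g_{k,i}=\argmax_{u\in K}\inner{u}{-v_k+\alpha\Delta_i}\in K$ accumulated in $d_{k+1}=\sum_{j\le k}(A_{j+1}-A_j)g_j$. Since each linear-maximization output lies in $K$ and $d_{k+1}$ is a weighted average of such points with weights summing to $A_{k+1}$, the coefficients $\frac{\beta}{A_{k+1}+\beta}$ and the per-sample weights sum to one; convexity of $K$ then gives $x_{k+1}\in K$. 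This step mirrors the standard Frank-Wolfe feasibility argument and I expect it to be routine once the weight bookkeeping is written out.

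The substantive step is relating the primal-dual gap $f(x_\N)-\D(y_\N)$ to the dual suboptimality $\D(\ystar)-\D(y_\N)$ controlled by Theorem~\ref{thm:composite-result}. Applying Theorem~\ref{thm:composite-result} to~\eqref{prob:dual-smoothed} bounds $\E[\support_\alpha(-y_\N)+f^*(y_\N)-(\support_\alpha(-\ystar)+f^*(\ystar))]$ by the exponential term plus the neighborhood $\frac{\varsq}{2\sqrt{\muh\Lg}}$; substituting $\muh=\nicefrac1L$, $\Lg=\frac{R_K M}{\alpha}$ and the variance bound yields the contraction rate $\frac{\sqrt{\alpha}}{4\sqrt{L R_K M}}$, the factor $\Lg D_w(\ystar,y_0)=\frac{R_K M}{\alpha}D_w$, and the noise term $\frac{2R_K^2\rho_{\|\cdot\|_*}}{m}\sqrt{\frac{\alpha L}{R_K M}}$ appearing in the theorem. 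I would then use the primal-dual relation: since $z_k=\nabla f(x_k)$, Fenchel duality gives $f(x_\N)+f^*(y_\N)=\inner{y_\N}{x_\N}$, and combining this with the definition $\D(y)=-\support(-y)-f^*(y)$ converts the dual gap into $f(x_\N)-\D(y_\N)$, at the cost of the smoothing bias $\support_\alpha(-y)\le\support(-y)+\alpha\support_1(0)$ from~\cref{prop:perturbed-opt}. That smoothing inequality is precisely what produces the additive $\alpha\support_1(0)$ term. Finally I would bound the initial Bregman divergence $D_w(\ystar,y_0)$ by a primal quantity: because $w=f^*-\inner{g_{f^*}(z_0)}{\cdot}$ and $y_0=\nabla f(x_0)$, the divergence $D_w(\ystar,y_0)=D_{f^*}(\ystar,y_0)$ equals, by conjugate duality, the primal Bregman divergence $D_f(x_0,x^*)\le f(x_0)-f(\xstar)$, giving the factor $f(x_0)-f(\xstar)$ in the bound.

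The main obstacle I anticipate is the third paragraph: correctly threading the Fenchel-duality identities so that the gap $f(x_\N)-\D(y_\N)$ emerges cleanly, while tracking both the smoothing bias (which must stay as a single $\alpha\support_1(0)$ term and not contaminate the contraction) and the conjugate-duality conversion of $D_w(\ystar,y_0)$ into the primal gap $f(x_0)-f(\xstar)$. The plug-in of the specific constants into the rate of Theorem~\ref{thm:composite-result} is mechanical, but verifying that the exponential coefficient simplifies to exactly $\frac{\sqrt\alpha}{4\sqrt{L R_K M}}$ under the standing assumption $\frac{R_K M}{\alpha}\ge\frac1L$ (so that $\muh\le\Lg$ and the clean bound $\exp(-\tfrac\N4\sqrt{\muh/\Lg})$ applies) requires care.
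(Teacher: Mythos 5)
Your feasibility induction, the instantiation $\G(y)=\support_\alpha(-y)$, $\h=f^*$, $\Lg=\frac{R_KM}{\alpha}$, $\muh=\muw=\nicefrac1L$, and the variance check all match the paper, but the central step of your plan---obtaining the primal-dual gap from \cref{thm:composite-result} used as a black box---does not go through, for two concrete reasons. First, \cref{thm:composite-result} controls only the dual suboptimality $\E[F(y_\N)-F(\ystar)]$ for the smoothed dual objective; the primal iterates $x_k$ never appear in its statement, so at best (via strong duality and the smoothing sandwich, $F(\ystar)\le -f(\xstar)+\alpha\support_1(0)$) you obtain a bound on $f(\xstar)-\D(y_\N)$, which is missing the term $f(x_\N)-f(\xstar)$ entirely. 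Second, the identity you invoke to inject $f(x_\N)$, namely $f(x_\N)+f^*(y_\N)=\inner{y_\N}{x_\N}$, is false: equality in Fenchel--Young requires $y_\N\in\partial f(x_\N)$, but the algorithm maintains $z_\N=\nabla f(x_\N)$ while $y_\N$ is a convex combination of gradients at $x_0,\dots,x_\N$, so in general $y_\N\neq\nabla f(x_\N)$. Your treatment of the initial term is likewise unjustified: the Bregman correspondence $D_{f^*}(y_1,y_2)=D_f(x_2,x_1)$ requires $x_i\in\nabla f^*(y_i)$, and the primal point paired with $\ystar$ is the minimizer of the \emph{smoothed} primal (namely $\nabla\support_\alpha(-\ystar)\in K$), not $\xstar$; moreover $f^*$ need not be differentiable, since smoothness of $f$ yields strong convexity of $f^*$ but not strict convexity of $f$.

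The paper's proof avoids all of this by re-entering the Lyapunov recursion of \cref{prop:Lyap-breg} rather than invoking \cref{thm:composite-result}. The special choice $w(y)=f^*(y)-\inner{x_0}{y}$ makes the minimum value of the model computable in closed form by conjugacy: since $z_k$ minimizes $m_k$, one gets $m_k(z_k)=c_k-(A_k+\Lg)f\bigl(\tfrac{-d_k+\Lg x_0}{A_k+\Lg}\bigr)=c_k-(A_k+\Lg)f(x_k)$, and this exact formula is where $f(x_\N)$ enters the bound---no Fenchel--Young equality at $(x_\N,y_\N)$ is needed, and $\ystar$ never appears (the factor $f(x_0)-f(\xstar)$ comes instead from $w(z_0)=f^*(z_0)-\inner{x_0}{z_0}=-f(x_0)$, a legitimate Fenchel--Young equality since $z_0=\nabla f(x_0)$, together with $(A_\N+\Lg)f(x_\N)\ge A_\N f(x_\N)+\Lg f(\xstar)$). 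Because the comparison point $\ystar$ is dropped, the constants $c_\N$ must be bounded directly, and that is where $\alpha\support_1(0)$ actually arises: $\G(v)-\inner{\nabla\G(v)}{v}=-\support_\alpha^*(\nabla\support_\alpha(-v))\le\alpha\support_1(0)$ because $\nabla\support_\alpha(-v)\in K$ and $\support_\alpha^*\ge I_K-\alpha\support_1(0)$, giving $c_\N\le A_\N\,\alpha\support_1(0)$; the cheap inequality $\support_\alpha(-y_\N)\ge\support(-y_\N)$ then converts the smoothed dual value to the true one at no cost. To repair your argument you would need to replace your third paragraph by this explicit computation of $m_\N(z_\N)$, or some equivalent mechanism that couples $x_\N$ to the recursion itself.
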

The above convergence result is obtained by first plugging the smoothness and strong convexity constants of this section within the result of~\cref{prop:Lyap-breg}. Then, one has to relate $m_k(z_k)$ to the primal objective $f(x_k)$. The first term in the upper bound then follows by recursion. The second term comes from the variance, which can be bounded by $\tfrac{4R_K^2 \rho_{\norm{\cdot}}}{m}$ when $m$ stochastic gradients are computed in parallel. The last term $\alpha \support_1(0)$ is due to the error induced by the smoothing of the dual. The full proof can be found in~\cref{app:AFW-proofs}. From~\cref{thm:AFW-result} we are able to show that~\cref{alg:AFW} achieves acceleration under parallelization.

\begin{theorem}
\label{thm:AFW-result-bigO}
Suppose $f$ is convex and $L$-smooth w.r.t. $\norm{\cdot}_*$ and let $x_0 \in K$. Under~\cref{alg:AFW} with $\alpha = \min \left\{ \frac{\epsilon}{3 \support_1(0) }, \frac{ M \epsilon^2 m^2}{36 L R_K^3 \rho_{\norm{\cdot}_*}^2} \right\}$, the number of iterations required to achieve an $\epsilon$ primal-dual gap, i.e., $\E[f(x_\N) - d(y_\N)] \leq \epsilon$, is
\begin{align}
    \label{eq:thm-AFW-big0-1}
    \N \geq {\color{red}1+}\frac{{\color{red}8} \sqrt{L R_K M }}{\sqrt{\alpha}} \log \left( \frac{{\color{red} 6L} R_K M (f(x_0) - f(\xstar))}{\epsilon \alpha}\right).
\end{align}
The complexity is therefore $\tilde{O}\left( \max\left( \frac{1}{\sqrt{\epsilon}}, \frac{1}{\epsilon m}\right)\right).$
\end{theorem}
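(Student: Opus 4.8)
The plan is to derive the iteration count from the convergence bound in \cref{thm:AFW-result} by choosing $\alpha$ so that each of the two non-exponential error terms is at most $\epsilon/3$, and then inverting the exponential term. First I would note that \cref{thm:AFW-result} gives three contributions to the gap: an exponentially decaying term, the variance term $\tfrac{2R_K^2 \rho_{\norm{\cdot}_*}}{m} \sqrt{\tfrac{\alpha L}{R_K M}}$, and the smoothing bias $\alpha \support_1(0)$. A natural strategy is to force each of the last two to be bounded by $\tfrac{\epsilon}{3}$, which determines the prescribed value $\alpha = \min\left\{\tfrac{\epsilon}{3 \support_1(0)}, \tfrac{M \epsilon^2 m^2}{36 L R_K^3 \rho_{\norm{\cdot}_*}^2}\right\}$. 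One simply checks that the first choice makes $\alpha \support_1(0) \leq \tfrac{\epsilon}{3}$ and that the second, after substituting into $\tfrac{2R_K^2 \rho_{\norm{\cdot}_*}}{m}\sqrt{\tfrac{\alpha L}{R_K M}}$, makes the variance term $\leq \tfrac{\epsilon}{3}$; both are elementary algebraic verifications.

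With the two error terms controlled, it remains to ensure the exponential term is also $\leq \tfrac{\epsilon}{3}$. Setting $\exp\left(-\N \tfrac{\sqrt{\alpha}}{4\sqrt{L R_K M}}\right) \tfrac{R_K M}{\alpha}(f(x_0) - f(\xstar)) \leq \tfrac{\epsilon}{3}$ and solving for $\N$ yields exactly the logarithmic lower bound in~\eqref{eq:thm-AFW-big0-1}. Summing the three bounds then gives a total gap of at most $\epsilon$, establishing the iteration requirement.

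The final big-$O$ claim then follows by analyzing which branch of the $\min$ defining $\alpha$ is active. If the smoothing-bias branch is active (small $m$), then $\alpha = \tfrac{\epsilon}{3\support_1(0)}$ scales like $\epsilon$, so $\tfrac{1}{\sqrt{\alpha}} \sim \tfrac{1}{\sqrt{\epsilon}}$, and since the logarithmic factor is absorbed into the $\tilde{O}$ notation we get $\tilde{O}(\tfrac{1}{\sqrt{\epsilon}})$ iterations. If the variance branch is active (large $m$), then $\alpha \sim \epsilon^2 m^2$, so $\tfrac{1}{\sqrt{\alpha}} \sim \tfrac{1}{\epsilon m}$, giving $\tilde{O}(\tfrac{1}{\epsilon m})$. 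Taking the worse of the two recovers the stated complexity $\tilde{O}\left(\max\left(\tfrac{1}{\sqrt{\epsilon}}, \tfrac{1}{\epsilon m}\right)\right)$, and in particular $m \sim \tfrac{1}{\sqrt{\epsilon}}$ balances the two branches and yields the accelerated $\tilde{O}(\tfrac{1}{\sqrt{\epsilon}})$ rate.

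I expect the main obstacle to be purely bookkeeping rather than conceptual: one must carefully substitute the chosen $\alpha$ into $\sqrt{\alpha}$ and into the variance term while tracking all the constants $R_K$, $M$, $L$, $\rho_{\norm{\cdot}_*}$, and $\support_1(0)$ so that the numeric constants ($3$, $4$, $36$, etc.) line up exactly and each term genuinely lands at $\tfrac{\epsilon}{3}$. The mild case assumption $\tfrac{R_K M}{\alpha} \geq \tfrac{1}{L}$ from before \cref{thm:AFW-result} should be verified as consistent with the small-$\alpha$ regime, but this is automatic since $\alpha \to 0$ as $\epsilon \to 0$. No genuinely hard inequality is needed beyond monotonicity of the exponential and the square root.
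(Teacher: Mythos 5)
Your proposal is correct and follows essentially the same route as the paper's proof: choose $\alpha$ so that the smoothing bias $\alpha \support_1(0)$ and the variance term $\tfrac{2R_K^2 \rho_{\norm{\cdot}_*}}{m}\sqrt{\tfrac{\alpha L}{R_K M}}$ from \cref{thm:AFW-result} are each at most $\epsilon/3$, then invert the exponential term to obtain~\eqref{eq:thm-AFW-big0-1}, with the $\tilde{O}$ complexity read off from whichever branch of the $\min$ defining $\alpha$ is active. Your extra observation that the assumption $\tfrac{R_K M}{\alpha} \geq \tfrac{1}{L}$ remains valid in the small-$\alpha$ regime is a harmless addition that the paper leaves implicit.
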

The above complexity result shows that (i)~when $m = \nicefrac{1}{\sqrt{\epsilon}}$, we get an accelerated rate $\tilde{O}(\nicefrac{1}{ \sqrt{\epsilon}})$, (ii)~when $m=1$, we recover the classical $\tilde{O}(\nicefrac{1}{\epsilon})$ complexity of Frank-Wolfe (up to logarithmic terms), (iii)~there is no theoretical gain in going beyond $m = \nicefrac{1}{\sqrt{\epsilon}}$ computing units in parallel.

Moreover, while the total number of calls to the linear optimization oracle to reach an $\epsilon$ primal-dual gap is the same as in Frank-Wolfe for any value of $m$, in the case where $m > 1$, the number of required gradients of $f$  is strictly less than in Frank-Wolfe. This is because in each iteration of~\cref{alg:AFW} we compute one gradient of $f$ no matter what the value of $m$ is. In particular, when $m = \nicefrac{1}{\sqrt{\epsilon}}$, we need only $\tilde{O}(\nicefrac{1}{\sqrt{\epsilon}})$ gradients of $f$ in total, compared to $O(\nicefrac{1}{\epsilon})$ in Frank-Wolfe.

One can note that the proposed algorithm in not as universal as the classical Frank-Wolfe method, as it requires upper bounds on several problem-specific constants. However, acceleration of Frank-Wolfe has been extensively studied in the literature (see~\cref{sec:introduction}) and to the best of our knowledge our work is the first to provide accelerated rates (under parallelization) without further assumptions on the constraint set and/or the objective function. We argue next that estimating upper bounds on most parameters does not pose significant challenges. Indeed, as the user is free to pick any distribution to sample $\Delta$ from (up to the assumptions of~\cref{prop:perturbed-opt}), one can choose a distribution for which the constant $M$ is easy to compute (we give two examples in~\cref{sec:experiments}). Moreover, the user typically knows the set over which the optimization is carried over, and from such knowledge an upper bound on the diameter can often be easily computed. Finally, we also need to upper bound the Lipschitz constant $L$ of the gradient. This seems to be more of a limiting factor compared to classical Frank-Wolfe, although an upper bound on $L$ is often (but not always) necessary to implement other accelerated versions of Frank-Wolfe \cite{garber2015faster, lan2016conditional} as well. While it might be possible to circumvent this problem using some sort of line search within our method, it is not straightforward as the smoothing of the dual induces stochasticity, and obtaining theoretical rates for line search techniques on stochastic objectives is notoriously tedious \cite{bottou2018optimization, vaswani2019painless, vaswani2022towards}.

Finally, note that it might seem like a small modification to~\cref{alg:AFW} could give a variant of Frank-Wolfe capable of handling stochastic gradients of~$f$ (for previous works on stochastic Frank-Wolfe, see \cite{hazan2016variance, ghadimi2019conditional, negiar2020stochastic} and references therein). Indeed, one could see the noise in $\nabla f(x_k)$ in the update of $v_k$ as the random variable $\Delta$ itself and study the resulting algorithm. However, the current analysis would only work if the noise on $\nabla f(x_k)$ does not depend on the current iterate. Moreover, even under this simplifying assumption, the result does not follow immediately as the stochasticity would appear in both the updates of $v_k$ and $y_{k+1}$, in contrast with the current analysis.

\section{Experiments}
\label{sec:experiments}
We first consider minimizing a quadratic function in $\R^d$ over the simplex
\begin{align}
    \label{eq:simplex}
    \min_{x \in K_1} \ \{f_1(x) := \frac{1}{2} \normsq{Ax - b}_2\} \quad \text{where} \quad K_1 = \left\{ x\in \R^d \mid x \geq 0, \sum_{i=1}^d x_i = 1\right\},
\end{align}
where $A \in \R^{n \times d}$ and $b\in \R^n$. We choose $\Delta$ to have the Gumbel distribution~\cite{gumbel1954statistical} with location and scale parameters equal to 0 and 1 respectively. In this case $R_{K_1} = 1$ and $M = \sqrt{d}$ (see~\cref{app:exp-details} for details). We set $n=200$, $d=50$ and compare the bound from~\cref{thm:AFW-result} with the practical performance of~\cref{alg:AFW} for both $m=1$ and $m= \nicefrac{1}{\sqrt{\alpha}}$ parallel computer(s).~\cref{figure1:upper-bound-comparison_New} shows that our upper bound captures well the speed of convergence to a neighborhood.

\begin{figure}[!ht]
 \includegraphics[scale=0.36]{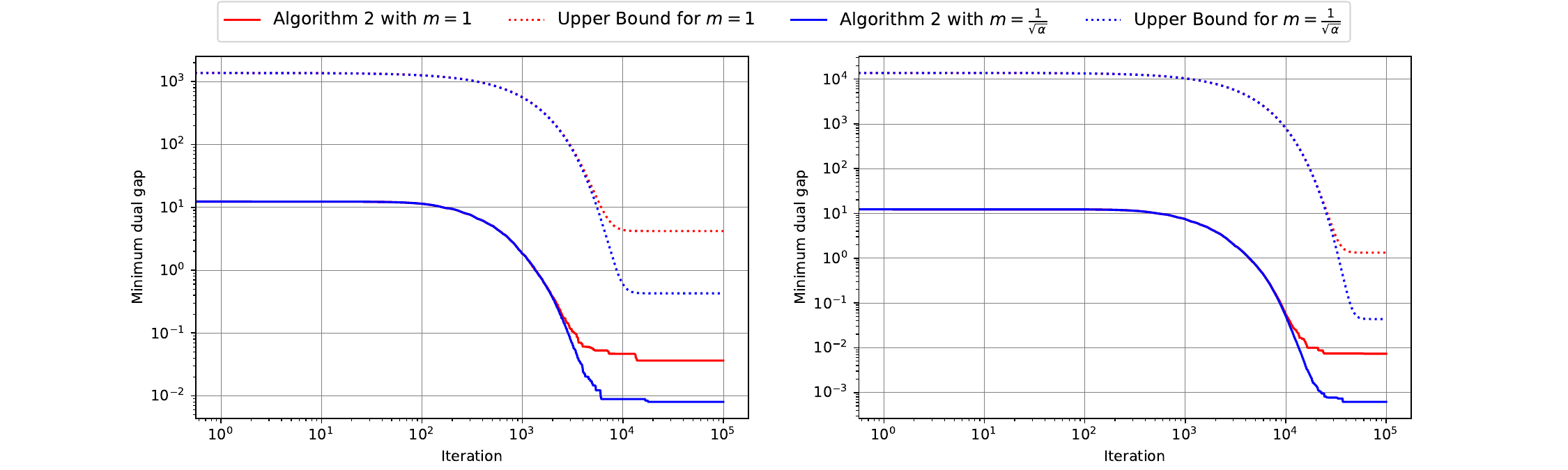}
\caption{Comparisons between the behavior of \Cref{alg:AFW} and that of its theoretical upper bound (see \Cref{thm:AFW-result}) on a least-squares problem on the simplex for $\alpha = 10^{-2}$ (left) and $\alpha = 10^{-3}$ (right). The plots report the value of the best primal-dual gap incurred at the current iteration.}
  \label{figure1:upper-bound-comparison_New}
\end{figure}

Next, in order to circumvent the inevitable stalling due to the fixed value of $\alpha$ observed above, we suggest a restarted algorithm which decreases the value of $\alpha$ during training. Starting from $\alpha = 1$ and some $x_0 \in K$, we run~\cref{alg:AFW} for $T_\alpha$ iterations with $m_\alpha$ computers in parallel to obtain some approximate solution $x_{\alpha} \in K$. We then decrease $\alpha$ by a constant factor $c < 1$ and run~\cref{alg:AFW} again with the new value of $\alpha$ starting from $x_{\alpha}$. We repeat this process until a satisfying solution is obtained. We formalize this procedure in~\cref{alg:R-PFW}. In practice we choose $c = 0.5$ and set $T_\alpha = \sqrt{\frac{L}{\alpha}} \log \frac{1}{\alpha}$ as a simplified version of the bound~\eqref{eq:thm-AFW-big0-1}, and run the above procedure for both $m_\alpha = 1$ and $m_\alpha = \nicefrac{1}{\sqrt{\alpha}}$ computers in parallel.

\begin{algorithm}[H]
\caption{Restarted Parallel Frank-Wolfe (R-PFW)}
\label{alg:R-PFW}
\textbf{Input}: $(L, x_0, R_K, M, c)$. $L$-smooth convex function $f$, $x_0 \in K$, $R_K = \max_{x\in K} \norm{x}_*$, distribution with density $d\pi(z) \propto \exp(-\eta(z))dz$ such that $M^2 = \E_\Delta \norm{\nabla \eta(\Delta)}_*^2$, decreasing factor $c$.\\
Set $\alpha = 1$.\\
\For{$i = 0, 1, \dots$}{
     Set $T_\alpha = \sqrt{\frac{L}{\alpha}} \log \frac{1}{\alpha}$ and either set $m_\alpha = \frac{1}{\sqrt{\alpha}}$ or $m_\alpha = 1$.\\
     $x_{\alpha} = \text{\cref{alg:AFW}}(L, x_0, R_K, M, m_\alpha, \alpha, T_\alpha )$\vspace{0.5ex}\\
     Set $x_0 = x_\alpha$ and $\alpha = c\alpha$.
}
Return $x_\alpha$.
\end{algorithm}

We test the  restarted scheme on problem~\eqref{eq:simplex} as well as on a generalized matrix completion problem over the trace norm ball,
\begin{align}
    \label{eq:matrix-completion}
    \min_{X \in K_2} \ \{f_2(x) := \frac{1}{2} \normsq{CX - D}_F\} \quad \text{where} \quad K_2 = \left\{ X\in \R^{p \times q} \mid \norm{X}_{tr} \leq 1\right\} ,
\end{align}
where $C\in \R^{p \times p}$ and $D \in \R^{p \times q}$. Here $\norm{\cdot}_F$ stands for the Frobenius norm and $\norm{\cdot}_{tr}$ is the trace, or nuclear, norm. We choose $\Delta$ to have entries distributed according to a standard normal distribution. In this case $R_{K_2} =1$ and $M = \sqrt{pq}$.  We compare the restarted scheme with the Frank-Wolfe algorithm with step-size $\eta_k = \frac{2}{k+1}$ (denoted FW in the plots) and with exact line search (denoted FW-LS in the plots). We set $p = 10$, $q=8$ and plot the results in~\cref{figure:R-PFW_New}. For both problems, we observed that using $M = 1$ instead of the theoretical value ($M = \sqrt{d}$ for~\eqref{eq:simplex} and $M = \sqrt{pq}$ for~\eqref{eq:matrix-completion}) led to significant speedups. We consequently plot both strategies. We observe that for $m_\alpha = \nicefrac{1}{\sqrt{\alpha}}$, \cref{alg:R-PFW} significantly outperforms Frank-Wolfe, and for $m_\alpha = 1$ the performance is similar to Frank-Wolfe, which was expected from the discussion at the end of~\cref{sec:AFW}. Full code to reproduce the experiments can be found at the following link: \url{https://github.com/bpauld/PFW}.

\begin{figure}[!ht]
\centering
 \includegraphics[scale=0.54]{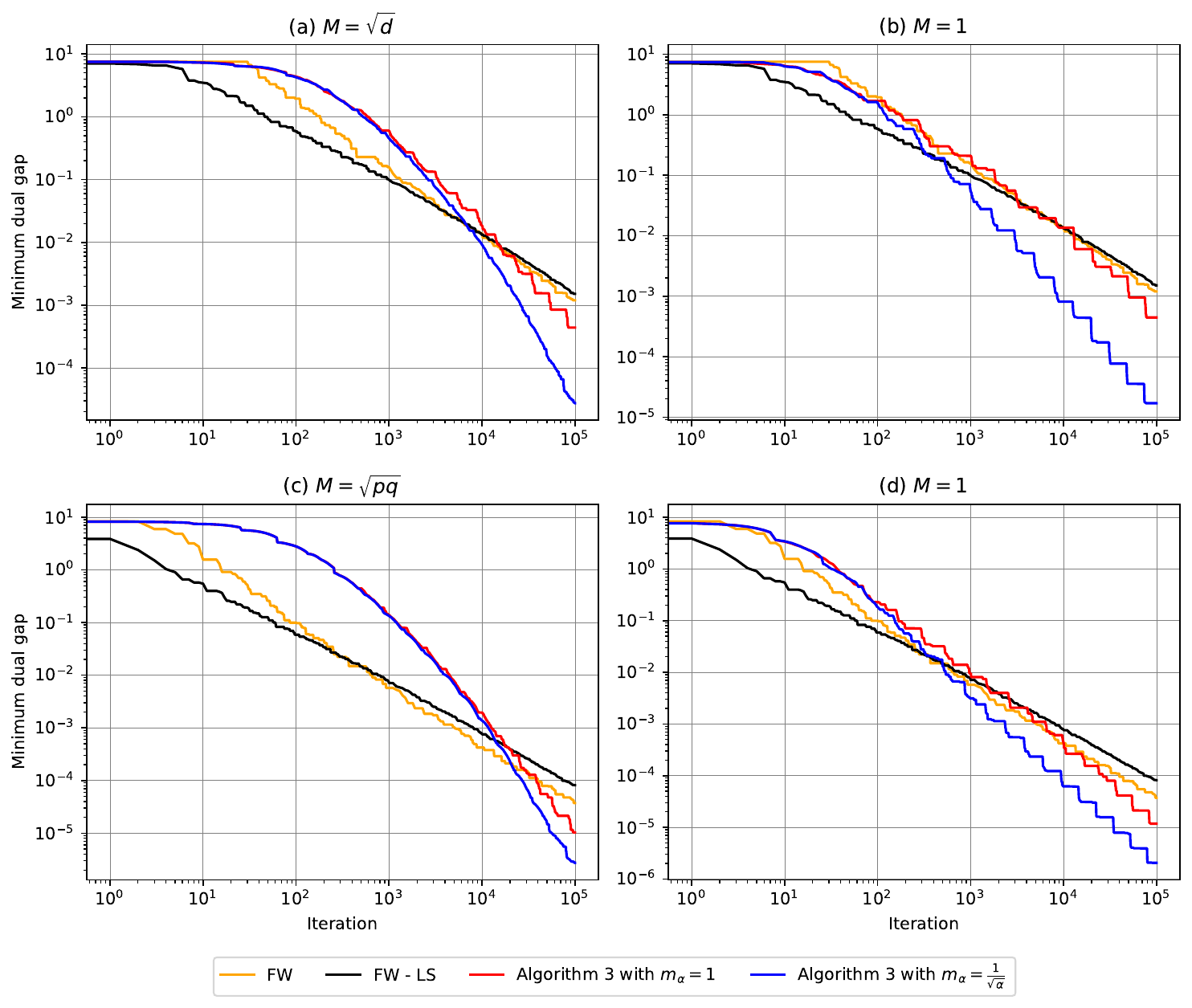}
\caption{Comparisons between Frank-Wolfe and the restarting scheme (\Cref{alg:R-PFW}): a least-squares problem on the simplex ((a) and (b)), and a generalized matrix completion problem on the trace ball ((c) and (d)). The plots report the value of the best primal-dual gap incurred at the current iteration.}
  \label{figure:R-PFW_New}
\end{figure}

\section{Conclusion}
In this work we introduced a stochastic proximal Bregman algorithm able to converge to a neighborhood of the solution at the same accelerated rate as its deterministic counterpart. We then used it to design a variant of the Frank-Wolfe algorithm able to achieve accelerated rates under parallelization. One drawback is that the resulting algorithm is not "any time", in the sense that the desired precision~$\epsilon$ must be given as an input to the algorithm. We circumvent this by designing a simple heuristic which slowly decreases the value of~$\epsilon$. We plan to investigate the theory for this restarted algorithm in the future. We also hope to use randomized smoothing techniques similar to the ones used in the work to obtain stochastic Frank-Wolfe methods.
\section{Acknowledgements}

The authors acknowledge support from the European Research Council (grant SEQUOIA 724063). This work was funded in
part by the french government under management of Agence Nationale de la recherche as part of the “Investissements
d’avenir” program, reference ANR-19-P3IA-0001 (PRAIRIE 3IA Institute).

The authors would like to warmly thank Alexander Gasnikov for very constructive remarks and for pointing out the existence and relationship of our work with~\cite{lan2016conditional, ghadimi2019conditional, jin2022sharper, cohen2021relative}.
\bibliography{ref} 
\bibliographystyle{plainnat}
\section*{Checklist}

\begin{enumerate}

\item For all authors...
\begin{enumerate}
  \item Do the main claims made in the abstract and introduction accurately reflect the paper's contributions and scope?
    \answerYes{}
  \item Did you describe the limitations of your work?
    \answerYes{See conclusion.}
  \item Did you discuss any potential negative societal impacts of your work?
    \answerNA{}
  \item Have you read the ethics review guidelines and ensured that your paper conforms to them?
    \answerYes{}
\end{enumerate}

\item If you are including theoretical results...
\begin{enumerate}
  \item Did you state the full set of assumptions of all theoretical results?
    \answerYes{See proposition and theorem statements.}
        \item Did you include complete proofs of all theoretical results?
    \answerYes{See Appendix.}
\end{enumerate}

\item If you ran experiments...
\begin{enumerate}
  \item Did you include the code, data, and instructions needed to reproduce the main experimental results (either in the supplemental material or as a URL)?
    \answerYes{The code is available at \url{https://github.com/bpauld/PFW}.}
  \item Did you specify all the training details (e.g., data splits, hyperparameters, how they were chosen)?
    \answerYes{}
        \item Did you report error bars (e.g., with respect to the random seed after running experiments multiple times)?
    \answerNA{}
        \item Did you include the total amount of compute and the type of resources used (e.g., type of GPUs, internal cluster, or cloud provider)?
    \answerNA{}
\end{enumerate}

\item If you are using existing assets (e.g., code, data, models) or curating/releasing new assets...
\begin{enumerate}
  \item If your work uses existing assets, did you cite the creators?
    \answerNA{}
  \item Did you mention the license of the assets?    \answerNA{}
  \item Did you include any new assets either in the supplemental material or as a URL? \answerNA{}
  \item Did you discuss whether and how consent was obtained from people whose data you're using/curating?    \answerNA{}
  \item Did you discuss whether the data you are using/curating contains personally identifiable information or offensive content?
    \answerNA{}
\end{enumerate}

\item If you used crowdsourcing or conducted research with human subjects...
\begin{enumerate}
  \item Did you include the full text of instructions given to participants and screenshots, if applicable?
    \answerNA{}
  \item Did you describe any potential participant risks, with links to Institutional Review Board (IRB) approvals, if applicable?
    \answerNA{}
  \item Did you include the estimated hourly wage paid to participants and the total amount spent on participant compensation?
    \answerNA{}
\end{enumerate}

\end{enumerate}


\clearpage
\appendix

\section*{Organization of the Appendix}

\begin{itemize}

   \item[\ref{app:composite-proofs}] 
   \hyperref[app:composite-proofs]{Proofs for~\cref{sec:stochastic-composite-minimization}}
   
   \item[\ref{app:AFW-proofs}] 
   \hyperref[app:AFW-proofs]{Proofs for~\cref{sec:AFW}}
   
   \item[\ref{app:exp-details}] 
   \hyperref[app:exp-details]{Experimental details}
\end{itemize}
\section{Proofs for Stochastic Composite Minimization}
\label{app:composite-proofs}

\subsection{Proof of\texorpdfstring{~\cref{prop:Lyap-breg}}{\space}}
We break down the proof of~\cref{prop:Lyap-breg} into different lemmas. The first one bounds the improvement on $\G$ over an iteration, the second one bounds the improvement $\h$ over an iteration, the third one combines the first two results and further exploits the structure of the functions at play. The proof of the proposition then follows by carefully choosing $A_{k+1}$ as a function of $A_k$. In the remainder of the section, we work with the assumptions stated in~\cref{prop:Lyap-breg} and do not restate them in the statements of the lemmas.

\begin{lemma}
\begin{align*}
    A_{k+1}\G(y_{k+1}) - c_{k+1}  \leq & \,  A_k \G(y_k) - c_k + \left( A_{k+1} - A_k\right)\inner{\nabla \G(v_k)}{z_{k+1}} \\
    & + \frac{\Lg}{2}\left( A_{k+1} - 2A_k + \frac{A_k^2}{A_{k+1}}\right) \normsq{z_{k+1} - z_k}.
\end{align*}
\end{lemma}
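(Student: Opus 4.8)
The plan is to prove this bound purely deterministically, relying only on $\Lg$-smoothness and convexity of $\G$; no expectation is taken at this stage, and the optimality of $\zkk$ plays no role, so the inequality holds for the point $\zkk$ regardless of how it was produced. First I would record the affine identities that follow immediately from $v_k=(1-\tauk)\yk+\tauk\zk$ and $\ykk=(1-\tauk)\yk+\tauk\zkk$, namely $\ykk-v_k=\tauk(\zkk-\zk)$ and $\yk-v_k=\tauk(\yk-\zk)$, together with $\tauk=(A_{k+1}-A_k)/A_{k+1}$ and $A_k=A_{k+1}(1-\tauk)$.

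Next I would apply the smoothness upper bound to $\G$ along the segment from $v_k$ to $\ykk$,
\[ \G(\ykk)\le \G(v_k)+\inner{\nabla\G(v_k)}{\ykk-v_k}+\tfrac{\Lg}{2}\normsq{\ykk-v_k}, \]
then substitute $\ykk-v_k=\tauk(\zkk-\zk)$ and multiply through by $A_{k+1}$. The coefficient $A_{k+1}\tauk$ of the linear term collapses to $A_{k+1}-A_k$, while the coefficient $A_{k+1}\tauk^2$ of the quadratic term becomes $(A_{k+1}-A_k)^2/A_{k+1}=A_{k+1}-2A_k+A_k^2/A_{k+1}$, which is exactly the last term in the statement. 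It then remains to convert $A_{k+1}\G(v_k)+(A_{k+1}-A_k)\inner{\nabla\G(v_k)}{\zkk-\zk}$ into the claimed right-hand side.

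For that I would split $\inner{\nabla\G(v_k)}{\zkk-\zk}=\inner{\nabla\G(v_k)}{\zkk}-\inner{\nabla\G(v_k)}{\zk}$, isolating the target term $(A_{k+1}-A_k)\inner{\nabla\G(v_k)}{\zkk}$, and then establish
\[ A_{k+1}\G(v_k)-(A_{k+1}-A_k)\inner{\nabla\G(v_k)}{\zk}\le A_k\G(\yk)+(c_{k+1}-c_k). \]
The crucial observation is the identity $A_k(\yk-v_k)=(A_{k+1}-A_k)(v_k-\zk)$ — both sides equal $A_{k+1}\tauk(1-\tauk)(\yk-\zk)$ by the relations above. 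Using it, the residual inner products against $\zk$ and $v_k$ cancel exactly, and the inequality reduces to convexity of $\G$ tested at the linearization point $v_k$ against $\yk$, namely $\G(\yk)\ge \G(v_k)+\inner{\nabla\G(v_k)}{\yk-v_k}$, weighted by $A_k$; the leftover affine part $(A_{k+1}-A_k)\big(\G(v_k)-\inner{\nabla\G(v_k)}{v_k}\big)$ is precisely the increment $c_{k+1}-c_k$, so the sequence telescopes.

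The main obstacle I anticipate is this bookkeeping of the affine/linearization terms rather than any analytic difficulty: one must evaluate convexity at $v_k$ (the point where the gradient is taken, not at $\yk$), verify the identity $A_k(\yk-v_k)=(A_{k+1}-A_k)(v_k-\zk)$, and confirm that the increment of $c_k$ matches the linearization $\G(v_k)-\inner{\nabla\G(v_k)}{v_k}$ with weight $A_{k+1}-A_k$. Once this identity is in place, the stray inner products vanish identically, the convexity inequality supplies the remaining slack, and rearranging yields the stated bound.
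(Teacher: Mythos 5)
Your proposal is correct and follows essentially the same route as the paper's proof: smoothness of $\G$ at $v_k$ applied to $y_{k+1}$, convexity of $\G$ linearized at $v_k$ and tested against $y_k$, identification of $(A_{k+1}-A_k)\bigl(\G(v_k)-\inner{\nabla\G(v_k)}{v_k}\bigr)$ as the increment $c_{k+1}-c_k$, and the algebraic relations $y_{k+1}-v_k=\tau_k(z_{k+1}-z_k)$ and $A_{k+1}\tau_k^2=(A_{k+1}-A_k)^2/A_{k+1}$. The only difference is bookkeeping order — you substitute the $z$-differences up front and close the argument with the identity $A_k(y_k-v_k)=(A_{k+1}-A_k)(v_k-z_k)$, while the paper substitutes $y_{k+1}=\tfrac{A_k}{A_{k+1}}y_k+\tfrac{A_{k+1}-A_k}{A_{k+1}}z_{k+1}$ at the end — which is immaterial.
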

\begin{proof}
By smoothness
\begin{align*}
    A_{k+1}\G(y_{k+1}) \leq &\, A_{k+1} \left( \G(v_k) + \inner{\nabla \G(v_k)}{y_{k+1} - v_k} + \frac{\Lg}{2} \normsq{y_{k+1} - v_k} \right)\\
    = &\, \underbrace{\left( A_{k+1} - A_k\right) \left( \G(v_k) - \inner{\nabla \G(v_k)}{v_k} \right)}_{= c_{k+1} - c_k} + A_k\left( \G(v_k) - \inner{\nabla \G(v_k)}{v_k} \right) \\
    & + A_{k+1} \inner{\nabla \G(v_k)}{y_{k+1}} + A_{k+1} \frac{\Lg}{2} \normsq{y_{k+1} - v_k}\\
    = &\, c_{k+1} - c_k + A_k\left( \G(v_k) + \inner{\nabla \G(v_k)}{y_k - v_k} \right) - A_k \inner{\nabla \G(v_k)}{y_k}\\
    & + A_{k+1} \inner{\nabla \G(v_k)}{y_{k+1}} + A_{k+1} \frac{\Lg}{2} \normsq{y_{k+1} - v_k}\\
    \leq &\, c_{k+1} - c_k + A_k \G(y_k) - A_k \inner{\nabla \G(v_k)}{y_k}\\
    & + A_{k+1} \inner{\nabla \G(v_k)}{y_{k+1}} + A_{k+1} \frac{\Lg}{2} \normsq{y_{k+1} - v_k}
\end{align*}
where the last inequality follows by strong convexity of $\G$, Now, since $y_{k+1} = \frac{A_k}{A_{k+1}}y_k + \frac{A_{k+1} - A_k}{A_{k+1}}z_{k+1}$, we can simplify to
\begin{align*}
    A_{k+1}\G(y_{k+1}) &\leq c_{k+1} - c_k + A_k \G(y_k) + \left( A_{k+1} - A_k\right)\inner{\nabla \G(v_k)}{z_{k+1}} + A_{k+1} \frac{\Lg}{2} \normsq{y_{k+1} - v_k}.
\end{align*}
Finally,
\begin{align*}
    A_{k+1} \normsq{y_{k+1} - v_k} = A_{k+1} \tau_k^2 \normsq{z_k - z_{k+1}} = \frac{(A_{k+1} - A_k)^2}{A_{k+1}} \normsq{z_{k+1} - z_k}
\end{align*}
which concludes the proof.
\end{proof}

\begin{lemma}
For any $g_\h(z_k) \in \partial \h(z_k)$,
\begin{align*}
A_{k+1} \h(y_{k+1}) - A_{k+1}\h(z_{k+1}) \leq& \, A_k \h(y_k) - A_k \h(z_k) - A_k \inner{g_\h(z_k)}{z_{k+1} - z_k}\\
& - \frac{\muh}{2} A_k \left( 1- \frac{A_k}{A_{k+1}}\right) \normsq{z_{k+1} - y_k} - A_k \frac{\muh}{2} \normsq{z_{k+1} - z_k}.
\end{align*}
\end{lemma}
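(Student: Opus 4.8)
The plan is to mirror the structure of the preceding lemma (which bounded the improvement on $\G$), but now using only convexity and strong convexity of $\h$ rather than smoothness. The starting point is the update $y_{k+1} = (1-\tauk) y_k + \tauk z_{k+1}$, which writes $y_{k+1}$ as a convex combination of $y_k$ and $z_{k+1}$ with weight $\tauk = 1 - A_k/A_{k+1}$ on $z_{k+1}$ and weight $1-\tauk = A_k/A_{k+1}$ on $y_k$.

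First I would apply the strong-convexity inequality for convex combinations: since $\h$ is $\muh$-strongly convex, for any $\lambda \in [0,1]$ and points $a,b$ one has $\h(\lambda a + (1-\lambda) b) \le \lambda \h(a) + (1-\lambda)\h(b) - \frac{\muh}{2}\lambda(1-\lambda)\normsq{a-b}$. Taking $\lambda = \tauk$, $a = z_{k+1}$, $b = y_k$ gives
\[
\h(y_{k+1}) \le \tauk \h(z_{k+1}) + (1-\tauk)\h(y_k) - \frac{\muh}{2}\tauk(1-\tauk)\normsq{z_{k+1}-y_k}.
\]
Multiplying by $A_{k+1}$ and using the identities $A_{k+1}\tauk = A_{k+1}-A_k$, $A_{k+1}(1-\tauk) = A_k$, and $A_{k+1}\tauk(1-\tauk) = A_k(1 - A_k/A_{k+1})$ turns this into a bound whose coefficients already match those in the statement.

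Then I would subtract $A_{k+1}\h(z_{k+1})$ from both sides; the coefficient of $\h(z_{k+1})$ collapses from $(A_{k+1}-A_k) - A_{k+1}$ to $-A_k$. It remains to rewrite the leftover $-A_k\h(z_{k+1})$ term, and for this I would invoke strong convexity of $\h$ anchored at $z_k$ and evaluated at $z_{k+1}$: for any $g_\h(z_k) \in \partial \h(z_k)$,
\[
\h(z_{k+1}) \ge \h(z_k) + \inner{g_\h(z_k)}{z_{k+1}-z_k} + \frac{\muh}{2}\normsq{z_{k+1}-z_k},
\]
so that, since $A_k \ge 0$, we get $-A_k\h(z_{k+1}) \le -A_k\h(z_k) - A_k\inner{g_\h(z_k)}{z_{k+1}-z_k} - A_k\frac{\muh}{2}\normsq{z_{k+1}-z_k}$. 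Substituting this in yields the claimed inequality verbatim.

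I do not expect a serious obstacle: the argument is essentially two applications of strong convexity glued together by the convex-combination structure of the update. The only point requiring care is the bookkeeping of the $A_k$ coefficients—in particular checking $A_{k+1}\tauk(1-\tauk) = (A_{k+1}-A_k)\,\frac{A_k}{A_{k+1}} = A_k(1 - A_k/A_{k+1})$—so that the weight on the cross term $\normsq{z_{k+1}-y_k}$ comes out as $\frac{\muh}{2}A_k(1-A_k/A_{k+1})$ and not something still involving $A_{k+1}$.
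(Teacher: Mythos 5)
Your proof is correct and follows essentially the same route as the paper's: the paper's first step is precisely a proof of the convex-combination form of strong convexity that you invoke (two subgradient inequalities anchored at $y_{k+1}$, whose inner-product terms cancel because $z_{k+1}-y_{k+1}$ and $y_k-y_{k+1}$ are opposite multiples of $z_{k+1}-y_k$), and your second step — strong convexity anchored at $z_k$ to bound $-A_k\h(z_{k+1})$ — is identical to the paper's. The only point worth making explicit is that, since strong convexity here is with respect to a general norm via the subgradient definition, the interpolation inequality $\h(\lambda a + (1-\lambda)b) \leq \lambda\h(a) + (1-\lambda)\h(b) - \tfrac{\muh}{2}\lambda(1-\lambda)\normsq{a-b}$ should be derived from that definition (which is exactly what the paper's explicit computation does, and which is valid because $\h$ is finite-valued so $\partial\h(y_{k+1})\neq\emptyset$).
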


\begin{proof}
By strong convexity of $\h$, for any $g_\h(y_{k+1}) \in \partial \h(y_{k+1})$ we have
\begin{align*}
    A_{k+1} \h(y_{k+1}) =&\, \left( A_{k+1} - A_k\right) \h(y_{k+1}) + A_k \h(y_{k+1})\\
    \leq&\, (A_{k+1} - A_k) \left( \h(z_{k+1}) - \inner{g_\h(y_{k+1})}{z_{k+1} - y_{k+1}} - \frac{\muh}{2}\normsq{z_{k+1} - y_{k+1}}\right)\\
    &+ A_k\left( \h(y_k) - \inner{g_\h(y_{k+1})}{y_k - y_{k+1}} - \frac{\muh}{2}\normsq{y_k - y_{k+1}}\right).
\end{align*}
Now observe that
\begin{align*}
    z_{k+1} - y_{k+1} = (1 - \tau_k) (z_{k+1} - y_k) =  \frac{A_k}{A_{k+1}}(z_{k+1} - y_k)\\
    y_k - y_{k+1} = \tau_k (y_k - z_{k+1}) = -\frac{A_{k+1} - A_k}{A_{k+1}}(z_{k+1} - y_k)
\end{align*}
and thus we see that the two inner product terms cancel out. Moreover we can also simplify the norms and get
\begin{align*}
    A_{k+1}\h(y_{k+1}) \leq & A_k \h(y_k) + (A_{k+1} - A_k) \h(z_{k+1}) - \frac{\muh}{2} \normsq{z_{k+1} - y_k} \left( \underbrace{(A_{k+1} - A_k) \tfrac{A_k^2}{A_{k+1}^2}+ A_k \tfrac{(A_{k+1} - A_k)^2}{A_{k+1}^2}}_{= A_k - \tfrac{A_k^2}{A_{k+1}}} \right).
\end{align*}
Finally, observe that by strong convexity of $\h$ again, for any $g_\h(z_k) \in \partial H(z_k)$,
\begin{align*}
    -A_k \h(z_{k+1}) &\leq -A_k \h(z_k) - A_k \inner{g_\h(z_k)}{z_{k+1} - z_k} - A_k \frac{\muh}{2} \normsq{z_{k+1} - z_k},
\end{align*}
which concludes the proof.
\end{proof}

\begin{lemma}
Defining $m_k(x) = \inner{d_k}{x} + c_k + A_k \h(x) + \Lg \w(x)$, we have
\begin{align*}
    A_{k+1} F(y_{k+1}) - m_{k+1}(z_{k+1}) \leq &\, A_k F(y_k) - m_k(z_k) \\
    & +(A_{k+1} - A_k) \inner{\nabla \G(v_k) - g_k}{z_{k+1}} \\
    & - \frac{1}{2} \left( A_k(\muh + 2\Lg) + \Lg (\muw - A_{k+1}) - \Lg\frac{A_k^2}{A_{k+1}}\right) \normsq{z_{k+1} - z_k} \\
    & - \frac{\muh}{2} A_k \left( 1 - \frac{A_k}{A_{k+1}}\right) \normsq{z_{k+1} - y_k}.
\end{align*}
\end{lemma}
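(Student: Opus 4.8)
The plan is to add the two previous lemmas, use the decomposition $F=\G+\h$ together with the update $d_{k+1}=d_k+(A_{k+1}-A_k)g_k$, and then invoke first-order optimality of the Bregman proximal step~\eqref{alg1:minimization-step}. Recalling $m_{k+1}(z_{k+1})=\inner{d_{k+1}}{z_{k+1}}+c_{k+1}+A_{k+1}\h(z_{k+1})+\Lg\w(z_{k+1})$, I would first write $A_{k+1}F(y_{k+1})-m_{k+1}(z_{k+1})$ as $\big[A_{k+1}\G(y_{k+1})-c_{k+1}\big]+\big[A_{k+1}\h(y_{k+1})-A_{k+1}\h(z_{k+1})\big]-\inner{d_{k+1}}{z_{k+1}}-\Lg\w(z_{k+1})$ and bound the first two brackets by the first and second lemmas. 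Using $A_k\G(y_k)+A_k\h(y_k)=A_kF(y_k)$, this already yields the $A_kF(y_k)$ term, the leftover scalars $-c_k$ and $-A_k\h(z_k)$, the term $-\tfrac{\muh}{2}A_k(1-\tfrac{A_k}{A_{k+1}})\normsq{z_{k+1}-y_k}$ (the last line of the claim), and two $\normsq{z_{k+1}-z_k}$ contributions.

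Next I would handle the linear term in $z_{k+1}$. Expanding $-\inner{d_{k+1}}{z_{k+1}}=-\inner{d_k}{z_{k+1}}-(A_{k+1}-A_k)\inner{g_k}{z_{k+1}}$ and merging the second piece with the $(A_{k+1}-A_k)\inner{\nabla\G(v_k)}{z_{k+1}}$ produced by the first lemma gives exactly $(A_{k+1}-A_k)\inner{\nabla\G(v_k)-g_k}{z_{k+1}}$, the second line of the claim.

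The crux is then to convert the remaining $z_{k+1}$-dependent pieces $-\inner{d_k}{z_{k+1}}-A_k\inner{g_\h(z_k)}{z_{k+1}-z_k}-\Lg\w(z_{k+1})$ into the $z_k$-quantities that make up $-m_k(z_k)$. Since $z_k$ minimizes the strongly convex map $\inner{d_k}{\cdot}+A_k\h(\cdot)+\Lg\w(\cdot)$, there are $g_\h(z_k)\in\partial\h(z_k)$ and $g_\w(z_k)\in\partial\w(z_k)$ with $d_k+A_kg_\h(z_k)+\Lg g_\w(z_k)=0$, and I would apply the second lemma with precisely this $g_\h(z_k)$. Substituting $d_k=-A_kg_\h(z_k)-\Lg g_\w(z_k)$ cancels the $g_\h(z_k)$-products paired with $z_{k+1}$ and reduces the expression to $A_k\inner{g_\h(z_k)}{z_k}+\Lg\inner{g_\w(z_k)}{z_{k+1}}-\Lg\w(z_{k+1})$. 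Bounding $-\Lg\w(z_{k+1})$ by $\muw$-strong convexity of $\w$ about $z_k$ turns the $\w$-part into $-\Lg\w(z_k)+\Lg\inner{g_\w(z_k)}{z_k}-\tfrac{\Lg\muw}{2}\normsq{z_{k+1}-z_k}$, and then the optimality relation once more gives $A_k\inner{g_\h(z_k)}{z_k}+\Lg\inner{g_\w(z_k)}{z_k}=-\inner{d_k}{z_k}$. Together with the earlier $-c_k$ and $-A_k\h(z_k)$, this assembles exactly $-m_k(z_k)$.

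Finally I would collect the three $\normsq{z_{k+1}-z_k}$ terms — the $\tfrac{\Lg}{2}(A_{k+1}-2A_k+\tfrac{A_k^2}{A_{k+1}})$ from the first lemma, the $-\tfrac{\muh}{2}A_k$ from the second, and the $-\tfrac{\Lg\muw}{2}$ penalty just generated — and verify by elementary algebra that they sum to the stated coefficient $-\tfrac12\big(A_k(\muh+2\Lg)+\Lg(\muw-A_{k+1})-\Lg\tfrac{A_k^2}{A_{k+1}}\big)$. I expect the only delicate point to be the bookkeeping in the previous step: one must use the \emph{same} subgradient $g_\h(z_k)$ both in the second lemma and in the optimality condition (legitimate since the lemma holds for any subgradient), and rely only on strong convexity of $\w$, never on its differentiability.
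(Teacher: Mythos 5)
Your proposal is correct and follows essentially the same route as the paper's proof: summing the two lemmas, expanding $d_{k+1}=d_k+(A_{k+1}-A_k)g_k$ to form the $\inner{\nabla \G(v_k)-g_k}{z_{k+1}}$ term, invoking the first-order optimality condition $0\in d_k+A_k\partial \h(z_k)+\Lg\partial \w(z_k)$ with the same subgradient $g_\h(z_k)$ used in the second lemma, and applying $\muw$-strong convexity of $\w$ at $z_k$ to produce the $-\tfrac{\Lg\muw}{2}\normsq{z_{k+1}-z_k}$ contribution. Your bookkeeping (substituting $d_k=-A_kg_\h(z_k)-\Lg g_\w(z_k)$ directly, rather than the paper's add-and-subtract of $\inner{d_k}{z_{k+1}}$ and $\inner{d_k}{z_k}$) is an algebraically equivalent rearrangement, and your identification of the one delicate point — using the \emph{same} subgradient in both places, with no differentiability of $\w$ — matches the paper exactly.
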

\begin{proof}
Summing the inequalities in the above two lemmas above we have that for any $g_\h(z_k) \in \partial \h(z_k)$, 
\begin{align*}
    A_{k+1} F(y_{k+1}) - c_{k+1} - A_{k+1} \h(z_{k+1}) \leq&\, A_{k}F(y_k) - c_k - A_k \h(z_k)\\
    &+ (A_{k+1} - A_k) \inner{\nabla \G(v_k)}{z_{k+1}} - A_k \inner{g_\h(z_k)}{z_{k+1} - z_k}\\
    & - \frac{1}{2} \left( A_k(\muh + 2\Lg) - \Lg A_{k+1} - \Lg\frac{A_k^2}{A_{k+1}}\right) \normsq{z_{k+1} - z_k} \\
    & - \frac{\muh}{2} A_k \left( 1 - \frac{A_k}{A_{k+1}}\right) \normsq{z_{k+1} - y_k}.
\end{align*}
 Substracting $\inner{d_{k+1}}{z_{k+1}}$ on both sides and adding/substracting $\inner{d_k}{z_{k+1}}$ and $\inner{d_k}{z_k}$ on the right-hand side, we get
\begin{align*}
    A_{k+1} F(y_{k+1}) - c_{k+1} - A_{k+1} \h(z_{k+1}) - &\inner{d_{k+1}}{z_{k+1}} \\
     \leq &\, A_{k}F(y_k) - c_k - A_k \h(z_k) - \inner{d_k}{z_k}\\
    &+ \inner{d_k}{z_k} - \inner{d_{k+1}}{z_{k+1}} - \inner{d_k}{z_{k+1}} + \inner{d_k}{z_{k+1}}\\
    &+ (A_{k+1} - A_k) \inner{\nabla \G(v_k)}{z_{k+1}} - A_k \inner{g_\h(z_k)}{z_{k+1} - z_k}\\
    & - \frac{1}{2} \left( A_k(\muh + 2\Lg) - \Lg A_{k+1} - \Lg\frac{A_k^2}{A_{k+1}}\right) \normsq{z_{k+1} - z_k} \\
    & - \frac{\muh}{2} A_k \left( 1 - \frac{A_k}{A_{k+1}}\right) \normsq{z_{k+1} - y_k}\\
    =&\, A_{k}F(y_k) - c_k - A_k \h(z_k) - \inner{d_k}{z_k}\\
    &+ \inner{d_k - d_{k+1}}{z_{k+1}} \\
    &+ (A_{k+1} - A_k) \inner{\nabla \G(v_k)}{z_{k+1}} -  \inner{A_kg_\h(z_k) + d_k}{z_{k+1} - z_k}\\
    & - \frac{1}{2} \left( A_k(\muh + 2\Lg) - \Lg A_{k+1} - \Lg\frac{A_k^2}{A_{k+1}}\right) \normsq{z_{k+1} - z_k} \\
    & - \frac{\muh}{2} A_k \left( 1 - \frac{A_k}{A_{k+1}}\right) \normsq{z_{k+1} - y_k}.
\end{align*}
Now, by first-order optimality conditions of~\eqref{alg1:minimization-step}, $0 \in d_k + A_k \partial \h(z_k) + \Lg \partial w(z_k)$. Therefore there exist subgradients $g_\h'(z_k) \in \partial \h(z_k)$ and $g_w'(z_k) \in \partial w(z_k)$ such that $d_k + A_k g_\h'(z_k) = -\Lg g_w'(z_k)$.  Since the above inequality is true for any $g_\h(z_k) \in \partial \h(z_k)$, it is in particular true for $g_\h'(z_k)$, and thus we have
\begin{align*}
    A_{k+1} F(y_{k+1}) - c_{k+1} - A_{k+1} \h(z_{k+1}) - &\inner{d_{k+1}}{z_{k+1}}\\ \leq& \, A_{k}F(y_k) - c_k - A_k \h(z_k) - \inner{d_k}{z_k}\\
    &+ \inner{d_k - d_{k+1}}{z_{k+1}} \\
    &+ (A_{k+1} - A_k) \inner{\nabla \G(v_k)}{z_{k+1}} + \Lg  \inner{g_\w'(z_k)}{z_{k+1} - z_k}\\
    & - \frac{1}{2} \left( A_k(\muh + 2\Lg) - \Lg A_{k+1} - \Lg\frac{A_k^2}{A_{k+1}}\right) \normsq{z_{k+1} - z_k} \\
    & - \frac{\muh}{2} A_k \left( 1 - \frac{A_k}{A_{k+1}}\right) \normsq{z_{k+1} - y_k}.
\end{align*}
Since $d_k - d_{k+1} = - (A_{k+1} - A_k) g_k$ we get
\begin{align*}
    A_{k+1} F(y_{k+1}) - c_{k+1} - A_{k+1} \h(z_{k+1}) - \inner{d_{k+1}}{z_{k+1}} \leq&\, A_{k}F(y_k) - c_k - A_k \h(z_k) - \inner{d_k}{z_k}\\
    &+ (A_{k+1} - A_k) \inner{\nabla \G(v_k) - g_k}{z_{k+1}}\\
    &+ \Lg\inner{g_\w'(z_k)}{z_{k+1} - z_k} \\
    & - \frac{1}{2} \left( A_k(\muh + 2\Lg) - \Lg A_{k+1} - \Lg\frac{A_k^2}{A_{k+1}}\right) \normsq{z_{k+1} - z_k} \\
    & - \frac{\muh}{2} A_k \left( 1 - \frac{A_k}{A_{k+1}}\right) \normsq{z_{k+1} - y_k}.
\end{align*}
Finally, by strong convexity of $\w$ we have
\begin{align*}
    \Lg\inner{g_\w'(z_k)}{z_{k+1} - z_k} \leq \Lg \left( w (z_{k+1}) - \w(z_k) - \frac{\muw}{2} \normsq{z_{k+1} - z_k} \right), 
\end{align*}
and thus the previous inequality becomes
\begin{align*}
    A_{k+1} F(y_{k+1}) - m_{k+1}(z_{k+1}) \leq &\, A_k F(y_k) - m_k(z_k) \\
    &+ (A_{k+1} - A_k) \inner{\nabla \G(v_k) - g_k}{z_{k+1}} \\
    & - \frac{1}{2} \left( A_k(\muh + 2\Lg) + \Lg (\muw - A_{k+1}) - \Lg\frac{A_k^2}{A_{k+1}}\right) \normsq{z_{k+1} - z_k} \\
    & - \frac{\muh}{2} A_k \left( 1 - \frac{A_k}{A_{k+1}}\right) \normsq{z_{k+1} - y_k}.
\end{align*}
\end{proof}

Those three lemmas allow us to prove~\cref{prop:Lyap-breg}. \\
\textit{Proof of~\cref{prop:Lyap-breg}}.\\
We can rewrite the previous result as
\begin{align*}
    A_{k+1} F(y_{k+1}) - m_{k+1}(z_{k+1})\leq  &\, A_k F(y_k) - m_k(z_k) \\
    & + (A_{k+1} - A_k) \inner{\nabla \G(v_k) - g_k}{z_{k+1} - z_{k}} \\
    & + (A_{k+1} - A_k) \inner{\nabla \G(v_k) - g_k}{ z_{k}}\\
    & - \frac{1}{2} \left( A_k(\muh + 2\Lg) + \Lg (\muw - A_{k+1}) - \Lg\frac{A_k^2}{A_{k+1}}\right) \normsq{z_{k+1} - z_k} \\
    & - \frac{\muh}{2} A_k \left( 1 - \frac{A_k}{A_{k+1}}\right) \normsq{z_{k+1} - y_k}.
\end{align*}
Taking expectation at iteration $k$ conditioned on the previous iterations, we have $\E_k[(A_{k+1} - A_k) \inner{\nabla \G(v_k) - g_k}{z_k} = 0]$.

Moreover, by Fenchel-Young inequality we have that for any $\rho > 0$,
\begin{align*}
    \inner{\nabla \G(v_k) - g_k}{z_{k+1} - z_{k}} &\leq \frac{1}{2\rho} \normsq{\nabla \G(v_k) - g_k}_* + \frac{\rho}{2} \normsq{z_{k+1} - z_k}.
\end{align*}

Taking expectation and using~\cref{assumption:bounded-variance},
\begin{align*}
    (A_{k+1} - A_k) \E_k[\inner{\nabla \G(v_k) - g_k}{z_{k+1} - z_{k}}] \leq \frac{1}{2}(A_{k+1} - A_k) \frac{\varsq}{\rho} + \frac{1}{2} (A_{k+1} - A_k)\rho  \E_k \normsq{z_{k+1} - z_k}
\end{align*}
Thus we have
\begin{align*}
    \E_k \left[ A_{k+1} F(y_{k+1}) - m_{k+1}(z_{k+1}) \right] & \leq A_k F(y_k) - m_k(z_k) + \frac{1}{2} (A_{k+1} - A_k) \frac{\varsq}{\rho} \\
    & - \frac{1}{2} \left( A_k(\muh + 2\Lg) + \Lg (\muw - A_{k+1}) - \Lg\frac{A_k^2}{A_{k+1}} - \rho(A_{k+1} - A_k)\right) \E_k\normsq{z_{k+1} - z_k} \\
    & - \frac{\muh}{2} A_k \left( 1 - \frac{A_k}{A_{k+1}}\right) \E_k\normsq{z_{k+1} - y_k}.
\end{align*}
Now, observe that since $0 \leq A_k / A_{k+1} \leq 1$, the term in $\E_k \normsq{z_{k+1} - y_k}$ is non-positive. Therefore, to obtain the final result, it suffices to set $A_{k+1}$ so that the term in $\E_k\normsq{z_{k+1} - z_k}$ cancels out. In other words, we require
\begin{align*}
    &A_k(\muh + 2\Lg) + \Lg (\muw - A_{k+1}) - \Lg\frac{A_k^2}{A_{k+1}} - \rho(A_{k+1} - A_k) = 0\\ 
    \iff& A_{k+1}(\Lg + \rho ) - A_k(\muh + 2\Lg + \rho) - \Lg \muw+ \Lg \frac{A_k^2}{A_{k+1}} = 0\\
    \iff & A_{k+1}^2 (\Lg + \rho) - A_{k+1} \left( A_k(\muh + 2\Lg + \rho) + \Lg \muw\right) + \Lg A_k^2 = 0\\
    \Leftarrow \quad & A_{k+1} = \frac{A_k(\muh + 2\Lg + \rho) + \Lg \muw + \sqrt{\left(A_k(\muh + 2\Lg + \rho) + \Lg \muw\right)^2 - 4(\Lg + \rho)\Lg A_k^2}}{2(\Lg + \rho)}\\
    \iff& A_{k+1} = \frac{A_k(\muh + 2\Lg + \rho) + \Lg \muw + \sqrt{\Lg^2 \muw^2 + 2\Lg \muw A_k(\muh + 2\Lg + \rho) + A_k^2 \muh^2 + A_k^2\rho^2 + 2A_k^2 \muh\rho + 4 A_k^2 \muh \Lg}}{2(\Lg + \rho)}\\
    \iff& A_{k+1} = \frac{A_k(\muh + 2\Lg + \rho) + \Lg \muw + \sqrt{(\Lg \muw +  \muh A_k)^2 + 4A_k(\Lg^2 \muw + A_k \muh \Lg) + 2\Lg \muw A_k\rho + A_k^2\rho^2 + 2A_k^2 \muh\rho}}{2(\Lg + \rho)}.
\end{align*}
Setting $\rho = \sqrt{\muh \Lg}$ yields the update for $A_{k+1}$ in~\cref{alg:acc-stoch-bregman} and proves the proposition.

\subsection{Proof of\texorpdfstring{~\cref{thm:composite-result}}{\space}}\label{sec:composite_proof}
\begin{proof}
Unrolling the recursion in~\cref{prop:Lyap-breg} and taking total expectation, we have
\begin{align*}
    \E\left[ A_\N F(y_\N) - m_\N(z_\N)\right] \leq A_0 F(y_0) - m_0 (z_0) + A_\N \frac{1}{2\sqrt{\muh\Lg}} \varsq = -\Lg \w(z_0) + A_\N \frac{1}{2\sqrt{\muh\Lg} } \varsq
\end{align*}
Now,
\begin{align*}
    m_\N(\ystar) = A_\N \h(\ystar) + \Lg \w(\ystar) + \sum_{t=0}^{\N-1} \left( A_{t+1} - A_t \right) \left( \G(v_t) - \inner{\nabla \G(v_t)}{v_t} + \inner{g_t}{\ystar}\right)
\end{align*}
Taking total expectation on the $g_t$ we get 
\begin{align*}
    \E[m_\N(\ystar)] &= A_\N \h(\ystar) + \Lg \w(\ystar) + \sum_{t=0}^{\N-1} \left( A_{t+1} - A_t \right) \E\left[ \G(v_t) + \inner{\nabla \G(v_t)}{\ystar - v_t}\right]\\
    &\leq A_\N \h(\ystar) + \Lg \w(\ystar) + A_\N \G(\ystar) = A_\N F(\ystar) + \Lg \w(\ystar)
\end{align*}
where the inequality is by convexity of $\G$. Moreover, $m_\N(z_\N) \leq m_\N(\ystar)$ by construction and thus
\begin{align*}
    \E[A_\N F(y_\N) - A_\N F(\ystar) - \Lg \w(\ystar)] &\leq \E[A_\N F(y_\N) - m_\N(\ystar)] \\
    &\leq \E[A_\N F(y_\N) - m_\N(z_\N)] \\
    &\leq -\Lg \w(z_0) + \frac{A_\N}{2\sqrt{\muh \Lg}} \varsq
\end{align*}
and thus
\begin{align*}
    \E[F(y_\N) - F(\ystar)] &\leq \frac{\Lg (\w(\ystar) - \w(z_0))}{A_\N} + \frac{\varsq}{2\sqrt{\muh \Lg}}\\
    &= \frac{\Lg D_\w(\ystar, y_0)}{A_\N} + \frac{\varsq}{2\sqrt{\muh \Lg}}
\end{align*}
as $y_0 = z_0$ and $\w(\ystar) - \w(y_0)$ is equal to $D_\w (\ystar, y_0) :=w(\ystar) - w(y_0)-\langle g_w(y_0); \ystar-y_0\rangle$ (with $g_w(y_0)\in\partial w(y_0)$) through the choice $g_w(y_0)=0\in\partial w(y_0)$, which is valid as $y_0$ minimizes $w(\cdot)$. 

Finally, we can bound $A_{k+1}$ as
\begin{align*}
    A_{k+1} &\geq A_k \frac{\muh + 2\Lg + \sqrt{\muh \Lg} + \sqrt{\muh^2 + 4\muh \Lg + \sqrt{\muh \Lg}^2 + 2 \muh \sqrt{\muh \Lg}}}{2(\Lg + \sqrt{\muh \Lg})}\\
    &\geq A_k \frac{\muh + 2\Lg +\sqrt{\muh \Lg} + 2 \sqrt{\muh \Lg}}{2(\Lg + \sqrt{\muh \Lg})} \\
    &\geq A_k\left( 1 + \frac{\sqrt{\muh \Lg}}{2(\Lg + \sqrt{\Lg \muh})}\right)\\
    &= A_k \left( 1 + \frac{\sqrt{\muh}}{2( \sqrt{\Lg} + \sqrt{\muh})}\right), 
\end{align*}
{\bf\color{red} with $A_1=\frac{\nu \sqrt{\beta}  }{\sqrt{\beta} +\sqrt{  \mu }}$, leading to 
\begin{align*}
A_k&\geq \frac{\nu \sqrt{\beta}  }{\sqrt{\beta} +\sqrt{  \mu }} \left( 1 + \frac{\sqrt{\muh}}{2( \sqrt{\Lg} + \sqrt{\muh})}\right)^{k-1} \\
&\geq \frac{\nu \sqrt{\beta}  }{\sqrt{\beta} +\sqrt{  \mu }} \exp\left( \frac{(\N-1)\sqrt{\muh}}{4( \sqrt{\Lg} + \sqrt{\muh})}\right),
\end{align*}
for $k\geq 1$, where we used that $1+\frac{x}{2} \geq e^{\frac{x}{4}}$ for $ 0 \leq x \leq 1$. Therefore,
\begin{align*}
    \frac{1}{A_k}  \leq \color{red}\frac{\sqrt{\beta} +\sqrt{  \mu }}{\nu \sqrt{\beta}  } \exp\left( - \frac{{\color{red}(\N-1)} \sqrt{\muh}}{4 \left( \sqrt{\Lg} + \sqrt{\muh}\right)} \right)
    \end{align*}
}
To conclude,
\begin{align*}
    \E[F(y_\N) - F^*] \leq {\color{red}\frac{\sqrt{\beta} +\sqrt{  \mu }}{\nu \sqrt{\beta}  }}\exp\left( - \frac{{\color{red}(\N-1)} \sqrt{\muh}}{{\color{red} 4} \left( \sqrt{\Lg} + \sqrt{\muh}\right)} \right)\Lg D_\w(\ystar, y_0) + \frac{\varsq}{2\sqrt{\muh \Lg}}.
\end{align*}
\end{proof}
\section{Proofs for Accelerated Frank-Wolfe}
\label{app:AFW-proofs}

\subsection{Proof of\texorpdfstring{~\cref{thm:AFW-result}}{\space}}

\subsubsection{Proof of feasibility}
First we show that $x_{k} \in K$ for all $k \in \mathbb{N}$.
\begin{proof}
We prove this by induction. By assumption $x_0 \in K$. Now suppose $x_k \in K$ for some $k \in \mathbb{N}$. We then have
\begin{align*}
     x_{k+1} &= \frac{\Lg}{A_{k+1} + \Lg} x_0 - \frac{d_{k+1}}{A_{k+1} + \Lg}\\
     &= \frac{\Lg}{A_{k+1} + \Lg}x_0 - \frac{d_k + (A_{k+1} - A_k) g_k}{A_{k+1} + \Lg}\\
     &= \frac{(A_k + \Lg)\left(\frac{\Lg}{A_k +\Lg} x_0 - \frac{d_k}{A_k + \Lg}\right) - (A_{k+1} - A_k) g_k}{A_{k+1} + \Lg}\\
     &= \frac{(A_k + \Lg)}{A_{k+1} + \Lg} x_k + \frac{A_{k+1} - A_k}{A_{k+1} + \Lg} (-g_k)\\
     &= \frac{A_k + \Lg}{A_{k+1} + \Lg} x_k + \left( 1- \frac{A_k + \Lg}{A_{k+1} + \Lg}\right) (-g_k).
\end{align*}
By induction hypothesis, $x_k \in K$. We also have $-g_k \in K$ since
\begin{align*}
    -g_k &= - \frac{1}{m} \sum_{i=1}^m g_{k, i} \\
    &= \frac{1}{m} \sum_{i=1}^{m} \argmax_{u\in K} \inner{u}{-v_k + \alpha \Delta_i}.
\end{align*}
In other words, $-g_k$ is a convex combination of elements of $K$ and is thus in $K$. Therefore $x_{k+1} \in K$ as a convex combination of elements of $K$.
\end{proof}

\subsubsection{Proof of dual gap convergence}
\begin{proof}
With $\h(y) = f^*(y)$, $\G(y) = \support_\alpha(-y)$, $\Lg = \frac{R_K M}{\alpha}$ and $\muh = \frac{1}{L}$, we can apply~\cref{prop:Lyap-breg} to $F = \h + \G$ and get
\begin{align*}
    \E_k[A_{k+1} F(y_{k+1}) - m_{k+1}(z_{k+1})] \leq A_k F(y_k) - m_k(z_k) + (A_{k+1} - A_k) \frac{\varsq}{2\sqrt{\muh \Lg}}
\end{align*}
where $m_k(y) = \inner{d_k}{y} + c_k + A_k \h(y) + \Lg \w(y)$, and $z_{k} = \nabla f(x_k)$. Unrolling the recursion as before and taking total expectation we have
\begin{align*}
    \E[A_\N F(y_\N) - m_\N(z_\N)] \leq - \Lg w(z_0) + A_\N \frac{\varsq}{2\sqrt{\muh \Lg} }.
\end{align*}
Recall that we set $\w(y) =  f^*(y) - \inner{x_0}{y}$. Plugging in the function $\h = f^*$, and recalling that $z_k $ minimizes $m_k(y)$, we get
\begin{align*}
    m_{k}(z_k) &= \inf_y \left\{ \inner{d_k}{y} + c_k + (A_k +  \Lg) f^*(y) - \Lg \inner{x_0}{y}\right\}\\
    &= c_k - \sup_y \left\{ \inner{- d_k + \Lg  x_0}{y} - (A_k + \Lg) f^*(y) \right\}\\
    &= c_k - (A_k + \Lg) f\left(  \frac{-d_k + \Lg  x_0}{A_k + \Lg} \right) \\
    &= c_k - (A_k + \Lg) f(x_k).
\end{align*}
Thus we can conclude that
\begin{align*}
    \E[A_\N F(y_\N) + (A_\N + \Lg) f(x_\N)] &\leq -\Lg w(z_0) + A_\N \frac{\varsq}{2\sqrt{\muh \Lg}} + c_\N,
\end{align*}
and in particular
\begin{align}
    \label{eq:AFW-proof-ineq2}
    \E[A_\N F(y_\N) + A_\N f(x_\N) ] &\leq - \beta f(\xstar)-\Lg w(z_0) + A_\N \frac{\varsq}{2\sqrt{\muh \Lg}} + c_\N.
\end{align}
Now, 
\begin{align*}
    c_\N = \sum_{i=0}^{\N-1} (A_{i+1} - A_i) ( \G(v_i) - \inner{\nabla \G(v_i)}{v_i}).
\end{align*}
For any $v \in \Eset$, $\G(v) = \support_\alpha(-v)$ and thus
\begin{align*}
    \G(v) - \inner{\nabla \G(v)}{v} &= \support_\alpha(-v) + \inner{\nabla \support_\alpha(- v)}{v}.
\end{align*}
From Fenchel-Young, $\inner{\nabla \support_\alpha(- v)}{ - v} = \support_\alpha(-v) + \support_\alpha^*(\nabla \support_\alpha(-v))$, and thus
\begin{align*}
    \G(v) - \inner{\nabla \G(v)}{v} = - \support_\alpha^*(\nabla \support_\alpha(-v)).
\end{align*}
Now, for all $u \in \Eset^*$,
\begin{align*}
    \support_\alpha^*(u) &= \sup_{v\in \Eset}\left\{ \inner{u}{v} - \support_\alpha(v)\right\} \\
    &\geq \sup_{v\in \Eset} \left\{\inner{u}{v} - \support(v) - \alpha s_1(0)\right\}\\
    &= \support^*(u) - \alpha \support_1(0)\\
    &= I_K(u) - \alpha \support_1(0)
\end{align*}
where the inequality is from~\cref{prop:perturbed-opt}. In particular, since  $\nabla \support_\alpha(-v)$ is always feasible, we have $\support_\alpha^*(\nabla \support_\alpha(-v)) \geq - \alpha \support_1(0)$. and thus
\begin{align*}
    c_k \leq \sum_{i=0}^{\N - 1} (A_{i+1} - A_i) \alpha \support_1(0) = A_\N \alpha \support_1(0).
\end{align*}
We can then rewrite~\eqref{eq:AFW-proof-ineq2} as
\begin{align*}
    \E[A_\N f^*(y_\N) + A_\N \support_\alpha(- y_\N) + A_\N f(x_\N)] \leq -\Lg w(z_0) - \Lg f(\xstar) + A_\N \frac{\varsq}{2\sqrt{\muh \Lg}} + A_\N \alpha \support_1(0).
\end{align*}
Now, 
\begin{align*}
    w(z_0) = f^*(z_0) - \inner{x_0}{z_0} =  -f(x_0)
\end{align*}
by Fenchel-Young and since $z_0 = \nabla f(x_0)$. Thus
\begin{align*}
    \E[A_\N f^*(y_\N) + A_\N \support_\alpha(- y_\N) + A_\N f(x_\N)] \leq \Lg (f(x_0) - f(\xstar)) + A_\N \frac{\varsq}{2\sqrt{\muh \Lg}} + A_\N \alpha \support_1(0).
\end{align*}
Finally, from~\cref{prop:perturbed-opt},
\begin{align*}
    \support_\alpha(-y) \geq \support(-y)
\end{align*}
for any $y$. We can then conclude
\begin{align*}
    \E \left[ f^*(y_\N) + \support(-y_\N) +  f(x_\N) \right] \leq \frac{\Lg (f(x_0) - f(\xstar))}{A_\N} + \frac{\varsq}{2 \sqrt{\muh \Lg}} + \alpha \support_1(0)
\end{align*}
Bounding $A_\N$ as in~\cref{thm:composite-result} yields
\begin{align}
    \label{eq:AFW-proof-ineq1}
    \E \left[ f^*(y_\N) + \support(-y_\N) +  f(x_\N) \right] \leq {\color{red}\frac{\sqrt{\beta} +\sqrt{  \mu }}{\mu \sqrt{\beta}  }}\exp\left( - \frac{{\color{red}(k-1)}\sqrt{\muh}}{{\color{red}4 } (\sqrt{\Lg} + \sqrt{\muh})} \right) \Lg (f(x_0) - f(\xstar)) + \frac{\varsq}{2 \sqrt{\muh \Lg}} + \alpha \support_1(0) 
\end{align}
It remains to bound the variance $\varsq$. Using~\cref{assumption:improved_variance} we have
\begin{align*}
    \varsq = \E \normsq{g_k - \nabla G(v_k)}_* \leq \frac{4 R_K^2 \rho_{\|\cdot\|_*}}{m}.
\end{align*}
Plugging this back into equation~\eqref{eq:AFW-proof-ineq1}, and plugging in the values of $\Lg = \frac{R_K M}{\alpha}$ and $\muh = \frac{1}{L}$ gives
\begin{align*}
    \E \left[ f(x_\N) - \D(y_\N) \right] \leq 
    \,&{\color{red}\frac{\sqrt{\frac{R_K M}{\alpha}} +\sqrt{  \frac{1}{L} }}{\frac{1}{L} \sqrt{\frac{R_K M}{\alpha}}  }} \exp\left( - \tfrac{{\color{red}(\N -1) }\sqrt{\tfrac{1}{L}}}{{\color{red}4}\left( \sqrt{\tfrac{R_K M}{\alpha}} + \sqrt{\tfrac{1}{L}} \right)} \right) \tfrac{R_K M}{ \alpha} \left( f(x_0) - f(\xstar)\right) \\&\quad+ \tfrac{2R_K^2 \rho_{\norm{\cdot}_*}}{m} \sqrt{\tfrac{\alpha L}{R_K M}} + \alpha \support_1(0). \\
    =\,& {\color{red} \exp\left( - \tfrac{{\color{red}(\N -1) }\sqrt{\tfrac{1}{L}}}{4\left( \sqrt{\tfrac{R_K M}{\alpha}} + \sqrt{\tfrac{1}{L}} \right)} \right) L \left(\sqrt{\frac{R_K M}{\alpha}} +\sqrt{  \frac{1}{L} } \right) \sqrt{\tfrac{R_K M}{ \alpha}} \left( f(x_0) - f(\xstar)\right)} \\&{\color{red}\quad+ \tfrac{2R_K^2 \rho_{\norm{\cdot}_*}}{m} \sqrt{\tfrac{\alpha L}{R_K M}} + \alpha \support_1(0).}
\end{align*}
Assuming $\frac{R_K M}{\alpha} \geq \frac{1}{L}$ yields the result.
\end{proof}

\subsection{Proof of\texorpdfstring{~\cref{thm:AFW-result-bigO}}{\space}}
\begin{proof}
Setting
\begin{align*}
    \alpha = \min \left\{ \frac{\epsilon}{3 \support_1(0) }, \frac{ M \epsilon^2 m^2}{36 L R_K^3 \rho_{\norm{\cdot}_*}^2} \right\},
\end{align*}
it is easy to verify that
\begin{align*}
    \alpha \support_1(0) \leq \frac{\epsilon}{3},
\end{align*}
and that
\begin{align*}
    \frac{2R_K^2 \rho_{\norm{\cdot}_*}}{m} \sqrt{\frac{\alpha L}{R_K M}} \leq \frac{\epsilon}{3}.
\end{align*}
It remains to compute $\N$ such that the first term in the bound of~\cref{thm:AFW-result} is also smaller than $\epsilon/3$. This gives
\begin{align*}
    &\exp\left( -{\color{red}(\N-1)} \frac{\sqrt{\alpha}}{{\color{red}8} \sqrt{L R_K M}}\right) {\color{red} 2L \frac{R_K M}{\alpha} } \left( f(x_0) - f(\xstar)\right) \leq \frac{\epsilon}{3}\\ \iff &\N \geq {\color{red}1+}\frac{{\color{red}8} \sqrt{L R_K M }}{\sqrt{\alpha}} \log \left(  {\color{red} \frac{6L R_K M \left( f(x_0) - f(\xstar)\right)}{\epsilon \alpha} }\right).
\end{align*}
The $\tilde{O}$-complexity follows directly from plugging the value of $\alpha$ in the bound.
\end{proof}

\subsection{Dependence on the norms}\label{app:norm_dependency}
In this section, we compute the value of $\rho_{\norm{\cdot}_*}$ for different $\ell_p$ norms when the underlying vector space has dimension $d$.

\subsubsection{Euclidean norm}\label{app:norm_dependency_euclidean}
In the case of the Euclidean norm, we have $\norm{\cdot} = \norm{\cdot}_* = \norm{\cdot}_2$ and thus
\begin{align*}
    \E \normsq{\frac{1}{m} \sum_{i=1}^m g_{k, i} - \nabla G(v_k)} =& \, \E \normsq{\frac{1}{m} \sum_{i=1}^m g_{k, i} + \nabla \support_\alpha(-v_k)}\\
    =&\, \frac{1}{m^2}  \sum_{i=1}^m \E\normsq{g_{k, i} + \nabla \support_\alpha(-v_k)} \\ & + \frac{2}{m^2}\sum_{1 \leq i < j \leq m} \E[\inner{g_{k, i} + \nabla \support_\alpha(-v_k)}{g_{k, j} + \nabla \support_\alpha(-v_k)}]\\
    =&\, \frac{1}{m^2} \sum_{i=1}^m \E\normsq{g_{k, i} + \nabla \support_\alpha(-v_k)}\\
    =&\, \frac{1}{m^2} \sum_{i=1}^m \E\normsq{ -\argmax_{u \in K}\inner{u}{-v_k + \alpha \Delta_i} + \nabla \support_\alpha(-v_k)}
\end{align*}
where the second equality simply comes from the properties of the Euclidean norm, and the third one comes from the fact that $g_{k, i} + \nabla \support_\alpha(-v_k)$ and $g_{k, j} + \nabla \support_\alpha(-v_k)$ are zero-mean independent random variables for all $i\not = j$.\\
Finally, $\argmax_{u \in K}\inner{u}{v} \in K$ for any $v$, and similarly $\nabla \support_\alpha(v) = \E\left[ \argmax_{u\in K} \inner{u}{v + \alpha \Delta}\right] \in K$ for all $v$. Therefore we have
\begin{align*}
    \E \normsq{\frac{1}{m} \sum_{i=1}^m g_{k, i} - \nabla G(v_k)} &\leq \frac{1}{m^2} \sum_{i=1}^m \max_{u, v \in K} \normsq{u - v} \\
    &\leq \frac{1}{m} \max_{u, v \in K} \left( \norm{u} + \norm{v}\right)^2\\
    &\leq \frac{1}{m} \max_{u, v \in K} 2 \norm{u}^2 + 2\norm{v}^2 \\
    &= \frac{ 4R_K^2}{m}
\end{align*}
and we see that in this case $\rho_{\|\cdot\|_2} = 1$.

\subsubsection{\texorpdfstring{$\ell_p$}{\space}-norms for \texorpdfstring{$2 \leq p < \infty$}{\space}}\label{app:norm_dependency_p_geq_2}
When $\norm{\cdot}_* = \norm{\cdot}_p$, we have $\norm{\cdot} = \norm{\cdot}_q$ for $\frac{1}{p} + \frac{1}{q} = 1$. Since $q \in (1, 2]$, from~\cite{ball1994sharp} we know that $\frac{1}{2} \normsq{\cdot}_q$ is $(q-1)$-strongly convex with respect to $\norm{\cdot}_q$. Therefore $\frac{1}{2} \normsq{\cdot}_p$ is $\frac{1}{q -1}$-smooth with respect to $\norm{\cdot}_p$ \cite{kakade2009duality}. We have
\begin{align*}
    \frac{1}{q - 1} = \frac{1/q}{1 - 1/q} = \frac{p}{q} = p - 1
\end{align*}
so $\frac{1}{2} \normsq{\cdot}_p$ is $(p-1)$-smooth with respect to $\norm{\cdot}_p$. We now closely follow the proof from~\citep[Lemma 2]{kakade2008complexity}. Let $F = \frac{1}{2} \normsq{\cdot}_p$ and let $Z_i = g_{k, i} - \nabla G(v_k)$ so that $\E \normsq{Z_i}_p \leq 4R_K^2$. Let $S_i = \sum_{j=1}^{i-1} S_j$. By smoothness of $F$ we have
\begin{align*}
    F(S_{i-1} + Z_i) \leq F(S_i) + \inner{\nabla F(S_{i-1})}{Z_i} + \frac{p-1}{2} \normsq{Z_i}_p 
\end{align*}
Taking conditional expectation with respect to $Z_1, \dots, Z_{i-1}$, since $\E[Z_i] = 0$ we have
\begin{align*}
    \E_{i}[F(S_i) \mid Z_1, \dots, Z_{i-1}] &\leq F(S_{i-1}) + \frac{p-1}{2} \E \left[ \normsq{Z_i}_p \mid Z_1, \dots, Z_{i-1}\right]\\
    &\leq F(S_{i-1}) + \frac{p-1}{2} 4R_K^2.
\end{align*}
Thus, $F(S_i) - \frac{i(p-1)}{2}$ is a supermartingale and therefore
\begin{align*}
    \E[F(S_n)] = \E\left[\frac{1}{2} \normsq{\sum_{i=1}^m g_{k, i} - \nabla G(v_k)}_p\right] \leq \frac{m (p-1)4R_K^2}{2}
\end{align*}
which shows that $\rho_{\norm{}_p} = p-1$.

\subsubsection{\texorpdfstring{$\ell_p$}{\space}-norms for \texorpdfstring{$1 \leq p < 2$}{\space}}
Recall that for $\infty \geq q > r \geq 1$,
\begin{align}
    \label{eq:norm_equivalence_ineq}
    \norm{\cdot}_q \leq \norm{\cdot}_r \leq d^{1/r - 1/q} \norm{\cdot}_q.
\end{align}

If the norm of interest is $\norm{\cdot}_* = \norm{\cdot}_p$ for $1 \leq p < 2$, we have
\begin{align*}
    \E \normsq{\frac{1}{m} \sum_{i=1}^m g_{k, i} - \nabla G(v_k)}_* &= \E \normsq{\frac{1}{m} \sum_{i=1}^m g_{k, i} - \nabla G(v_k)}_p\\
    &\leq \left(d^{1/p - 1/2}\right)^2 \E \normsq{\frac{1}{m} \sum_{i=1}^m g_{k, i} - \nabla G(v_k)}_2\\
    &\leq d^{(2/p - 1)} \frac{1}{m} \max_{u, v \in K} \normsq{u -v}_2
\end{align*}
where the second inequality comes from the derivation for the Euclidean norm in~\cref{app:norm_dependency_euclidean}. Using~\eqref{eq:norm_equivalence_ineq} we get
\begin{align*}
    \E \normsq{\frac{1}{m} \sum_{i=1}^m g_{k, i} - \nabla G(v_k)}_* &\leq d^{(2/p - 1)} \frac{1}{m} \max_{u, v \in K} \normsq{u -v}_p\\
    &= d^{(2/p - 1)} \frac{4 R_K^2}{m}
\end{align*}
and thus $\rho_{\norm{\cdot}_p} = d^{(2/p - 1)}$.

\subsubsection{\texorpdfstring{$\ell_\infty$}{\space}-norm}
When $\norm{\cdot}_* = \norm{\cdot}_\infty$, we use inequality~\eqref{eq:norm_equivalence_ineq} to get that for any $1 \leq r < \infty$,
\begin{align*}
    \E \normsq{\frac{1}{m} \sum_{i=1}^m g_{k, i} - \nabla G(v_k)}_* &= \E \normsq{\frac{1}{m} \sum_{i=1}^m g_{k, i} - \nabla G(v_k)}_\infty\\
    &\leq \E \normsq{\frac{1}{m} \sum_{i=1}^m g_{k, i} - \nabla G(v_k)}_r.
\end{align*}
Now, if $r \geq 2$, from~\cref{app:norm_dependency_p_geq_2} we have that
\begin{align*}
    \E \normsq{\frac{1}{m} \sum_{i=1}^m g_{k, i} - \nabla G(v_k)}_r &\leq \frac{ r-1}{m} \max_{u, v \in K} \normsq{u - v}_r \\
    &\leq \frac{(r-1)}{m} d^{2/r} \max_{u, v \in K} \normsq{u - v}_\infty\\
    &= \frac{(r-1)4 R_K^2}{m} d^{2/r}.
\end{align*}
Taking $r = 2 + \log d$ (so that $r \geq 2$), we then have
\begin{align*}
    \E \normsq{\frac{1}{m} \sum_{i=1}^m g_{k, i} - \nabla G(v_k)}_* &\leq \frac{(r-1)4 R_K^2}{m} e^{\tfrac{2}{r} \log(d)}\\
    &= \frac{4 R_K^2}{m} (\log d + 1) e^{\tfrac{2 \log d}{2 + \log d}}\\
    &\leq \frac{4 R_K^2}{m} 2(\log d + 1)
\end{align*}
and thus we have $\rho_{\norm{\cdot}_\infty} = 2 (\log d +1)$.

\subsubsection{General norm}
For a general norm, since we are in a finite-dimensional space, there exist constants $c, C > 0$ such that $c \norm{\cdot}_2 \leq \norm{\cdot}_* \leq C \norm{\cdot}_2$. We then have
\begin{align*}
    \E \normsq{\frac{1}{m} \sum_{i=1}^m g_{k, i} - \nabla G(v_k)}_* &\leq C^2 \E \normsq{\frac{1}{m} \sum_{i=1}^m g_{k, i} - \nabla G(v_k)}_2\\
    &\leq C^2 \frac{1}{m^2} \sum_{i=1}^m \max_{u, v \in K} \normsq{u - v}_2\\
    &\leq \frac{C^2}{c^2} \frac{1}{m^2} \sum_{i=1}^m \max_{u, v \in K} \normsq{u - v}_*\\
    &\leq \frac{C^2}{c^2} \frac{4 R_K^2}{m}
\end{align*}
where the second inequality comes from the derivation in~\cref{app:norm_dependency_euclidean}. 
Thus we $\rho_{\norm{\cdot}_*} \leq \frac{C^2}{c^2}$.
\section{Experimental Details}
\label{app:exp-details}
In the experiments we only consider Euclidean norms so that $\norm{\cdot} = \norm{\cdot}_*$.\\
If the entries of $z$ are independently distributed according to a Gumbel distribution with location 0 and scale 1, the probability density function reads
\begin{align*}
    p(z) = e^{- \left(\sum_{i=1}^dz_i + e^{-z_i}\right)}
\end{align*}
so that $\eta(z) = \sum_{i=1}^d z_i + e^{-z_i}$. Thus we have
\begin{align*}
    \nabla \eta(z) = \begin{pmatrix}
    1 - e^{-z_1}\\
    1 - e^{-z_2}\\
    \vdots\\
    1 - e^{-z_d}
    \end{pmatrix}
\end{align*}
and
\begin{align*}
    \normsq{\nabla \eta(z)} = \sum_{j=1}^d (1 - e^{-z_j})^2
\end{align*}
\cref{alg:AFW} and~\cref{alg:R-PFW} require a bound on the value of $M$. We compute it now.
\begin{align*}
    M^2 &= \E \normsq{\nabla \eta(Z)} \\
    & = \int_{\R^d}  \sum_{j=1}^d(1 - e^{-z_j})^2e^{- \left(\sum_{i=1}^dz_i + e^{-z_i}\right)} dz\\
    &= d \int_{\R^d}  (1 - e^{-z_1})^2e^{- \left(\sum_{i=1}^dz_i + e^{-z_i}\right)} dz\\
    &= d \int_\R(1 - e^{-z_1})^2 e^{-(z_1 + e^{-z_1})} \int_\R e^{-(z_2 + e^{-z_2})} \int_\R \dots \int_\R e^{-(z_d + e^{-z_d})} dz_d \dots dz_2 dz_1
\end{align*}
For any $i \geq 2$,
\begin{align*}
    \int_\R e^{-(z_i + e^{-z_i})} dz_i = 1.
\end{align*}
Moreover, one can check that
\begin{align*}
    \int(1 - e^{-z_1})^2 e^{-(z_1 + e^{-z_1})} dz_1 = e^{-e^{-z_1}} + e^{-2z_1 - e^{-z_1}} + C
\end{align*}
where $C$ is some constant. Taking limits one gets
\begin{align*}
    \int_\R(1 - e^{-z_1})^2 e^{-(z_1 + e^{-z_1})}dz_1 = 1
\end{align*}
and thus
\begin{align*}
    M^2 = d.
\end{align*}
The derivation in the case of a multivariate normal distribution is similar.

\end{document}